\theoremstyle{plain}
\newtheorem{theorem}{Theorem}[subsection]
\newtheorem{proposition}{Proposition}[subsection]
\newtheorem{definition}{Definition}[subsection]
\newtheorem{remark}{Remark}[subsection]
\newtheorem{example}{Example}[subsection]
\newtheorem{claim}{Claim}[subsection]
\newtheorem{lemma}{Lemma}[subsection]
\newtheorem{corollary}{Corollary}[subsection]
\newtheorem{theorema}{Theorem}
\begin{document}
	
	\title[Open Zooming Systems: Uniqueness of Equilibrium States]{Open Zooming Systems: Uniqueness of Equilibrium States}
	
	\author[R. A. Bilbao]{Rafael A. Bilbao}
	\address{Rafael A. Bilbao, Universidad Pedag\'ogica y Tecnol\'ogica de Colombia, Avenida
		Central del Norte 39-115, Sede Central Tunja, Boyac\'a, 150003, Colombia. 
	}
	\email{rafael.alvarez@uptc.edu.co}

	\author[E. Santana]{Eduardo Santana}
	\address{Eduardo Santana, Universidade Federal de Alagoas, 57200-000 Penedo, Brazil}
	\email{jemsmath@gmail.com}

%\dedicatory{Dedicated to the memory of Maur\'icio Peixoto}

\date{\today}

\maketitle

\begin{abstract} 
We study open zooming systems and potentials with uniqueness of equilibrium states. The uniqueness is established for a certain class of zooming potentials when the map is topologically exact, including the null one. Also, with equilibrium stability, we prove that there exists a countable and an open sets of continuous potentials with uniqueness which are both dense in the set of continuous potentials with finiteness. The results here are related to the works \cite{BiS} and \cite{S} where finiteness and stability are studied.
\end{abstract}

\bigskip

%\tableofcontents

%\addcontentsline{toc}{section}{Contents}

\section{Introduction}
The theory of equilibrium states in dynamical systems was first developed by Sinai, Ruelle and Bowen in the sixties and seventies. 
It was based on applications of techniques of Statistical Mechanics to smooth dynamics. The classical theory of equilibrium states is developed for continuous maps. Given a continuous map $f: M \to M$ on a compact metric space $M$ and a continuous potential $\phi : M \to \mathbb{R}$, an  \textbf{\textit{equilibrium state}} is an 
invariant measure that satisfies a variational principle, that is, a measure $\mu$ such that
\begin{equation}\label{equilibrium}
	\displaystyle h_{\mu}(f) + \int \phi d\mu = \sup_{\eta \in \mathcal{M}_{f}(M)} \bigg{\{} h_{\eta}(f) + \int \phi d\eta \bigg{\}},
\end{equation}
where $\mathcal{M}_{f}(M)$ is the set of $f$-invariant probabilities on $M$ and $h_{\eta}(f)$ is the so-called metric entropy of $\eta$.

For measurable maps we define equilibrium states as follows.
Given a measurable map $f: M \to M$ on a compact metric space $M$ for which the set $\mathcal{M}_{f}(M)$ of invariant measures is non-empty (for example, the continuous maps) and a measurable potential $\phi : M \to \mathbb{R}$, we define the \textbf{\textit{pressure}} $P_{f}(\phi)$ as 
\begin{equation}\label{pressure}
	\displaystyle P_{f}(\phi) : = \sup_{\eta \in \mathcal{M}_{f}(M)} \bigg{\{} h_{\eta}(f) + \int \phi d\eta \bigg{\}}
\end{equation}
and an \textbf{\textit{equilibrium state}} is an invariant measure that attains the supremum, and it is defined analogously to the case for continuous maps (see equation \ref{equilibrium}).

In the context of uniform hyperbolicity, which includes uniformly expanding maps, equilibrium states do exist and are unique if the potential is H\"older continuous
and the map is transitive. In addition, the theory for finite shifts was developed and used to achieve the results for smooth dynamics.

Beyond uniform hyperbolicity, the theory is still far from complete. It was studied by several authors, including Bruin, Keller, Demers, Li, Rivera-Letelier, Iommi and Todd 
\cite{BDM, BK, BT,DT,IT1,IT2,LRL} for interval maps; Denker and Urbanski  \cite{DU} for rational maps; Leplaideur, Oliveira and Rios 
\cite{LOR} for partially hyperbolic horseshoes; Buzzi, Sarig and Yuri \cite{BS,Y}, for countable Markov shifts and for piecewise expanding maps in one and higher dimensions. 
For local diffeomorphisms with some kind of non-uniform expansion, there are results due to Oliveira \cite{O}; Arbieto, Matheus and Oliveira \cite{AMO};
Varandas and Viana \cite{VV}, all of whom proved the existence and uniqueness of equilibrium states for potentials with low oscillation. Also, for this type of map,
Ramos and Viana  \cite{RV}  proved it for the so-called  \textbf{\textit{hyperbolic potentials}}, which include these previous ones for the case of non-uniform expansion. The hyperbolicity of the potential is
characterized by the fact that the pressure emanates from the hyperbolic region. In most of these studies previously cited, the maps do not have the presence of critical sets. In \cite{BDM} and \cite{DT}, for example, the authors develop results for open interval maps with critical sets, but not for hyperbolic potentials and, recently, Alves, Oliveira and Santana proved the existence of equilibrium states for hyperbolic potentials, possibly with the presence of a critical set (see \cite{AOS}). We will see that the potentials considered in \cite{LRL}, which allow critical sets, are included in the potentials considered in \cite{AOS}. Here, we give an example of a class of hyperbolic potentials. It includes the null potential for Viana maps. We stress that one of the consequences in this paper is the existence and uniqueness of measures of maximal entropy for Viana maps and we observe that in \cite{ALP} the authors show, in particular, the existence of at most countably many ergodic measures of maximal entropy. In \cite{LOP} the authors prove the existence of at most one. Recently, in the work \cite{L} the author announced uniqueness for Viana maps and potentials with small variations. In \cite{PV} Pinheiro-Varandas obtain existence and uniqueness by using another technique than us for what they call \textbf{\textit{expanding potentials}}. They are also considered in \cite{AOS}, include the hyperbolic ones and are included in our \textbf{\textit{zooming potentials}}, introduced in \cite{S}. It is proved that the class of hyperbolic potentials is equivalent to the class of continuous zooming potentials and that they include the null one. It guarantees uniqueness of measure of maximal entropy, including for Viana maps. The uniqueness in general is established when the maps is \textbf{\textit{topologically exact}}.

For interval maps and the so-called \textbf{\textit{geometric potentials}} $\phi_{t} = -t\log \mid Df \mid$, equilibrium states were studied in  \cite{BT}, \cite{IT1}, \cite{IT2} and \cite{PS}. These results inspired ours, for measurable zooming systems on metric spaces, where the potential is $\phi_{t} = -t\log J_{\mu}f$ and $J_{\mu}f$ is a Jacobian of the reference measure and we call them \textbf{\textit{pseudo-geometric potentials}}, which are examples of measurable ones. All results are for open dynamics, which reduce to the corresponding closed dynamics if the hole is empty. We also mention the works \cite{DT2} and \cite{PU} for important comprehension of the theory of equilibrium states for open systems and Hölder and geometric potentials. 

%Our strategy is also similar and follows exactly along the same lines of \cite{AOS}, since we do not use the analytical approach of the transfer operator in order to obtain conformal measures. We use results on countable Markov shifts by Sarig for the ``coded'' dynamics in inducing schemes constructed here, where a Markov structure is constructed. We prove that there exist finitely many ergodic equilibrium states
%that are zooming measures. Moreover, by following ideas of Iommi and Todd in \cite{IT1} we obtain a \textbf{\textit{pseudo-conformal}} measure with no need for a transfer operator. The uniqueness of the equilibrium state is obtained afterwards. In our setting, if the survivor set is disjoint from the hole, then all the ergodic equilibrium states give full measure to the survivor set, which contains the zooming set.
%
%Hence, we extend to the open context some results by many authors. For example, the recent work \cite{PV} by V. Pinheiro and P. Varandas, where they obtain existence and uniqueness of equilibrium states for closed zooming systems which are strongly topologically transitive and also with exponential contractions. Here, we use another technique and also obtain this for open systems, allowing nonexponential contractions, besides finiteness of ergodic equilibrium states with no requirement of transitivity.

We also have the study the \textbf{\textit{equilibrium stability}} for some open zooming systems in \cite{BiS}. Once the finiteness is obtained, the continuity of equilibrium states as studied in \cite{Ar} when we do not have uniqueness can be studied. In \cite{ARS} and \cite{BR} the authors study stability with uniqueness. Similar results of existence and uniqueness but  without stability can be seen in \cite{SSV}.

%We consider zooming systems with a quite general condition on the contractions and zooming potentials, proving the equilibrium stability in our Theorem \ref{D} for a family of such systems. In Theorem \ref{E} we also prove stability for skew-products with the base being a zooming system like above. 
%
%Our strategy is also similar to that used in \cite{ARS} where we prove that any accumulation point of a sequence of equilibrium states is also an equilibrium state for the limit system. We use a result in \cite{Ar} which deals with (semi)continuity of the pressure and the continuity of equilibrium states. The key point is the inequality between pressures. We also highlight the work \cite{FT} which also deals with this kind of problem. To prove stability for skew products we follow ideas of \cite{ARS}. 
%

\bigskip

\section{Setup and Main Results}\label{Setup}

In this section, we recall some definitions and results given in \cite{S} and give some definitions and state our main results. 

\subsection{Zooming sets and measures}

For differentiable dynamical systems, hyperbolic times are a powerful tool to obtain a type of expansion in the context of non-uniform expansion. As we can find in \cite{AOS}, it can be generalized for systems considered in a metric space, also with exponential contractions. The zooming times generalizes it beyond the exponential
context. Details can be seen in \cite{Pi1}.

Let $f : M \to M$ be a measurable map defined on a connected, compact, separable metric space $M$.

\begin{definition}
	(Zooming contractions). A \textbf{\textit{zooming contraction}} is a sequence of functions $\alpha_{n}: [0,+\infty) \to [0,+\infty)$ such that
	
	\begin{itemize}
		\item $\alpha_{n}(r) < r, \text{for all} \, \, n \in \mathbb{N}, \text{for all} \, \, r>0.$
		
		\item $\alpha_{n}(r)<\alpha_{n}(s), \,\, if \,\, 0<r<s, \text{for all} \, \, n \in \mathbb{N}$.
		
		\item $\alpha_{m} \circ \alpha_{n}(r) \leq \alpha_{m+n}(r), \text{for all} \, \, r>0, \text{for all} \, \, m,n \in \mathbb{N}$.
		
		\item $\displaystyle \sup_{r \in (0,1)} \sum_{n=1}^{\infty}\alpha_{n}(r) < \infty$.
	\end{itemize}
	
\end{definition}

As defined in \cite{PV}, we call the contraction $(\alpha_{n})_{n}$ \textbf{\textit{exponential}} if $\alpha_{n}(r) = e^{-\lambda n} r$ for some $\lambda > 0$ and \textbf{\textit{Lipschitz}} if $\alpha_{n}(r) = a_{n} r$ with $0 \leq a_{n} < 1, a_{m}a_{n} \leq a_{m+n}$ and $\sum_{n=1}^{\infty} a_{n} < \infty$. In particular, every exponential contraction is Lipschitz. We can also have the example with $a_{n} = (n+b)^{-a}, a > 1, b>0$.

\begin{definition}\label{times}
	(Zooming times). Let $(\alpha_{n})_{n}$ be a zooming contraction and $\delta>0$. We say that $n \in \mathbb{N}$ is an $(\alpha,\delta)$\textbf{\textit{-zooming time}} for $p \in M$ if there exists 
	a neighbourhood $V_{n}(p)$ of $p$ such that
	
	\begin{itemize}
		\item $f^{n}$ sends $\overline{V_{n}(p)}$ homeomorphically onto $\overline{B_{\delta}(f^{n}(p))}$;
		
		\item $d(f^{j}(x),f^{j}(y)) \leq \alpha_{n - j}(d(f^{n}(x),f^{n}(y)))$ for every $x,y \in V_{n}(p)$ and every $0 \leq j < n$.
	\end{itemize}
	
	We call $B_{\delta}(f^{n}(p))$ a \textbf{\textit{zooming ball}} and $V_{n}(p)$ a  \textbf{\textit{zooming pre-ball}}.
\end{definition}

We denote by $Z_{n}(\alpha,\delta,f)$ the set of points in $M$ for which $n$ is an $(\alpha, \delta)$- zooming time.

\begin{definition}
	(Zooming measure) A $f$-non-singular finite measure $\mu$ defined on the Borel sets of M is called a \textbf{\textit{weak zooming measure}} if $\mu$ almost every point has 
	infinitely many $(\alpha, \delta)$-zooming times. A weak zooming measure is called a \textbf{\textit{zooming measure}} if
\end{definition}
\begin{equation}\label{frequency}
	\displaystyle \limsup_{n \to \infty} \frac{1}{n} \{1 \leq j \leq n \mid x \in Z_{j}(\alpha,\delta,f)\} > 0,
\end{equation}
$\mu$ almost every $x \in M$.

\begin{definition}
	(Zooming set) A forward invariant set $\Lambda \subset M$ (that is, $T(\Lambda) \subset \Lambda$) is called a \textbf{\textit{zooming set}} if the above inequality \ref{frequency} holds for every $x \in \Lambda$. 
\end{definition}
\begin{remark}\label{fixed}
	We stress that measure $\mu$ is not necessarily ergodic and the definition of the zooming set $\Lambda$ depends on the contraction $\alpha$ and the parameter $\delta$. Moreover, every forward invariant subset $\Lambda_{0} \subset \Lambda$ ($f(\Lambda_{0}) \subset \Lambda_{0}$) is also a zooming set if $\Lambda_{0}$ is full measure for $\mu$. We then need to previously fix a zooming set of reference $\Lambda$, which depends both on $\alpha$ and $\delta$.
\end{remark}

\begin{definition}
	(Bounded distortion) Given a measure $\mu$ with a Jacobian $J_{\mu}f$, we say that the measure has \textbf{\textit{bounded distortion}} if there exists $\rho > 0$ such that
	\[
	\bigg{|}\log \frac{J_{\mu}f^{n}(y)}{J_{\mu}f^{n}(z)} \bigg{|} \leq \rho d(f^{n}(y),f^{n}(z)),
	\]
	for every $y,z \in V_{n}(x)$, $\mu$-almost everywhere $x \in M$, for every zooming time $n$ of $x$.
\end{definition}

The map $f$ with an associated zooming measure with bounded distortion is called a \textbf{\textit{zooming system}}. We then denote a zooming system by $(f,M,\mu,\Lambda)$, where $\Lambda$ is previously fixed, as observed in Remark \ref{fixed}.

\subsection{Pressure, zooming potentials and equilibrium states} For measurable maps we recall the definition of an equilibrium state, given in the Introduction. Given a measurable map $f: M \to M$ on a compact metric space $M$ for which the set $\mathcal{M}_{f}(M)$ of $f$-invariant measures is non-empty (for example, the continuous maps) and a measurable potential $\phi : M \to \mathbb{R}$, we define the \textbf{\textit{pressure}} $P_{f}(\phi)$ as in equation \ref{pressure} and an  \textbf{\textit{equilibrium state}} is an invariant measure that attains the supremum as in equation \ref{equilibrium}.

Denote by $\mathcal{Z}(\Lambda)$ the set of invariant zooming measures supported on $\Lambda$ (in [\cite{Pi1}, Theorem C] it is proved that this set is nonempty under the same hypothesis as ours for the map, that is, a zooming measure with bounded distortion). 

We define a \textbf{\textit{zooming potential}} as a measurable potential $\phi:M \to \mathbb{R}$ such that
\[
\displaystyle  \sup_{\eta \in \mathcal{Z}(\Lambda)^{c}} \bigg{\{} h_{\eta}(f) + \int \phi d\eta \bigg{\}} < \sup_{\eta \in \mathcal{Z}(\Lambda)} \bigg{\{} h_{\eta}(f) + \int \phi d\eta \bigg{\}}.
\]
It is analogous to the definition of expanding potential that can be found in \cite{PV}.

Denote by $h(f)$ the pressure of the potential $\phi \equiv 0$ (or topological entropy), which we call simply \textbf{\textit{entropy}} of $f$, that is,
\[
\displaystyle h(f) := P_{f}(0)  = \sup_{\eta \in \mathcal{M}_{f}(M)} \bigg{\{} h_{\eta}(f) \bigg{\}}.
\]

\begin{example}\label{Base}
	As an example of a zooming potential, if we assume that a potential $\phi_{0} : M \to \mathbb{R}$ is such that
	\[
	\displaystyle  \sup_{\eta \in \mathcal{Z}(\Lambda)^{c}} \bigg{\{} \int \phi_{0} d\eta \bigg{\}} < \sup_{\eta \in \mathcal{Z}(\Lambda)} \bigg{\{} \int \phi_{0} d\eta \bigg{\}},
	\]
	there exists $t_{0} > 0$ such that
	\[
	h(f) < \sup_{\eta \in \mathcal{Z}(\Lambda)} \bigg{\{} \int t_{0}\phi_{0} d\eta \bigg{\}} - \sup_{\eta \in \mathcal{Z}(\Lambda)^{c}} \bigg{\{} \int t_{0}\phi_{0} d\eta \bigg{\}}.
	\]
	By taking $\phi:=t_{0}\phi_{0}$, we obtain
	\[
	\displaystyle  \sup_{\eta \in \mathcal{Z}(\Lambda)^{c}} \bigg{\{}h_{\eta}(f) + \int \phi d\eta \bigg{\}} \leq h(f) + \sup_{\eta \in \mathcal{Z}(\Lambda)^{c}} \bigg{\{} \int \phi d\eta \bigg{\}}< \sup_{\eta \in \mathcal{Z}(\Lambda)} \bigg{\{} \int \phi d\eta \bigg{\}} \leq \sup_{\eta \in \mathcal{Z}(\Lambda)} \bigg{\{}h_{\eta}(f) + \int \phi d\eta \bigg{\}}.
	\]
\end{example}

\begin{example} \label{Dirac}
	Let us suppose that there exists a fixed point $x_{0} \in \Lambda$ and a potential $\phi_{0} : M \to \mathbb{R}$ such that $\phi_{0}(x) < \phi_{0}(x_{0})$ for all $x \in M \backslash \{x_{0}\}$. It is clear that the Dirac probability $\delta_{x_{0}}$ supported on $x_{0}$ is a zooming measure. Moreover, if we assume that
	\[
	\displaystyle  \sup_{\eta \in \mathcal{Z}(\Lambda)^{c}} \bigg{\{} \int \phi_{0} d\eta \bigg{\}} < \int \phi_{0} d \delta_{x_{0}} = \sup_{\eta \in \mathcal{Z}(\Lambda)} \bigg{\{} \int \phi_{0} d\eta \bigg{\}},
	\]
	then, by the previous example, we can find $t_{0} > 0$ such that $\phi:= t_{0}\phi_{0}$ is a zooming potential.
\end{example}

\begin{example}\label{Periodic}
	To generalize the previous example, let us suppose that there exists a periodic point $x_{0} \in \Lambda$ with period $k \geq 1$ and a potential $\phi_{0} : M \to \mathbb{R}$ such that $\phi_{0}(x) < \phi_{0}(f^{i}(x_{0})), i=0,1,\dots,k-1$ for all $x \in M \backslash \{f^{i}(x_{0}) \mid i=0,1,\dots,k-1\}$. It is clear that the average of Dirac probabilities $\mu_{0}:=(1/k)\sum_{i=0}^{k-1}\delta_{f^{i}(x_{0})}$ supported on the orbit of $x_{0}$ is a zooming measure. Moreover, if we assume that
	\[
	\displaystyle  \sup_{\eta \in \mathcal{Z}(\Lambda)^{c}} \bigg{\{} \int \phi_{0} d\eta \bigg{\}} < \int \phi_{0} d \mu_{0} = \sup_{\eta \in \mathcal{Z}(\Lambda)} \bigg{\{} \int \phi_{0} d\eta \bigg{\}},
	\]
	then, we can find $t_{0} > 0$ such that $\phi:= t_{0}\phi_{0}$ is a zooming potential.
\end{example}

\begin{remark}
	We observe that in the previous examples we have for every invariant probability $\mu$
	\[
	\int \phi_{0} d\mu \leq \int \phi_{0} d\delta_{x_{0}} \,\, \text{and} \,\, \int \phi_{0} d\mu \leq \int \phi_{0} d\mu_{0}, \,\, \text{respectively}.
	\]
	What we assume is that the supremum of the integrals over all non zooming measures is strictly less than the integral with respect to the probabilities $\delta_{x_{0}}$ and $\mu_{0}$, respectively.
\end{remark}

For open systems, we consider another type of pressure and equilibrium states as follows.

\begin{definition}(Open pressure and open equilibrium states) \label{openpressure}
	Given an open system $(f,M,H)$ with a hole $H$, we define the \textbf{\textit{open pressure}} as
	\begin{equation}\label{open pressure}
		\displaystyle P_{f,H}(\phi) : = \sup_{\eta \in \mathcal{M}_{f}(M,H)} \bigg{\{} h_{\eta}(f) + \int \phi d\eta \bigg{\}},
	\end{equation}
	where $\mathcal{M}_{f}(M,H)$ is the set of $f$-invariant measures $\eta$ such that $\eta(H) = 0$. Moreover, we define an \textbf{\textit{open equilibrium state}} as an invariant measure $\mu \in \mathcal{M}_{f}(M,H)$ such that
	\begin{equation}\label{open equilibrium}
		\displaystyle h_{\mu}(f) + \int \phi d\mu = \sup_{\eta \in \mathcal{M}_{f}(M,H)} \bigg{\{} h_{\eta}(f) + \int \phi d\eta \bigg{\}},
	\end{equation} 	
\end{definition}

\begin{remark}
	We readily have that $P_{f,H}(\phi) \leq P_{f}(\phi)$. Also, in the case where the system is closed, that is, where $H = \emptyset$, the open pressure reduces to the pressure and the open equilibrium state reduces to the equilibrium state, both previously defined.
\end{remark}

\begin{remark}
	The definitions \ref{open pressure} and \ref{open equilibrium} are more appropriate for the open systems because they focus on measures which give full mass to the survivor set $M^{\infty}$. In fact, if a measure gives full mass to the survivor set $M^{\infty}$, then it gives null mass to the hole $H$. Also, since the survivor set $M^{\infty}$ is invariant, if an $f$-invariant measure gives null mass to the hole $H$, then it gives full mass to the survivor set $M^{\infty}$. The study of equilibrium states is now concentrated at the survivor set $M^{\infty}$.
\end{remark}

\begin{definition}
	(Backward separated map) We say that a map $f:M \to M$ is \textbf{\textit{backward separated}} if for every finite set $F \subset M$ we have
	\[
	\displaystyle d\Bigg{(}F, \bigcup_{j=1}^{n} f^{-j}(F) \backslash F \Bigg{)} > 0, \text{for all} \, \,\, n \geq 1.
	\]
\end{definition}
Observe that if $f$ is such that $\sup\{\# f^{-1} (x) \mid x \in M \}< \infty$, then $f$ is backward separated.

%Denote by $\omega_{f,+}(z)$ the set of accumulation points of the orbit $\{f^{n}(z)\}_{n}$ at zooming times.

\begin{definition}(Open zooming systems) \label{open zooming}
	In order to make a zooming system $(f,M,\mu,\Lambda)$ \textbf{\textit{open}}, we take an open set $H \subset M$ as the hole and consider as the new zooming set the invariant set $\Lambda_{H} = \Lambda \cap M^{\infty}$ (if nonempty), where $M^{\infty}$ is the survivor set. We then take a measure $\mu_{H}$ such that $\mu_{H}(\Lambda_{H}) = 1$ as a new reference zooming measure and obtain the zooming system denoted by $(f,M,\mu_{H}, \Lambda_{H}, H)$. In order to have an open zooming system with Markov structure adapted to the hole, we consider the zooming system $(f,M,\mu,\Lambda_{H})$ and take a hole $H_{0} \subset H$ obtained by \cite{S}{Theorem A}. When we already have $\Lambda$ not dense in $M$, we can take $H$ disjoint from $\Lambda$ and we obtain $\Lambda \subset M^{\infty}$.
\end{definition}

Now, we state \cite{S}{Theorem B} on equilibrium states. With a quite general setup, we have existence and finiteness.

\begin{theorema}
	\label{B}
	Given a measurable open zooming system $f:M \to M$ which is backward separated, if the contraction $(\alpha_{n})_{n}$ satisfies $\alpha_{n}(r) \leq ar$ for some $a \in (0,1)$, every $n \in \mathbb{N}$ and every $r \in [0,+\infty)$ (Lipschitz, for example), with zooming set  $\Lambda$ (previously fixed) and hole $H$ given by Definition \ref{open zooming}. 
	
	\begin{itemize}
		
		\item For closed zooming systems $(H = \emptyset)$, if $\phi:M \to \mathbb{R}$ is a zooming potential with  finite pressure $P_{f}(\phi)$ and locally Hölder induced potential ($\phi$ Hölder, for example), then there are \textbf{\textit{finitely many}}  ergodic equilibrium states and they are zooming measures.  
		
		\item For open zooming systems, since $\Lambda_{H} \cap H = \emptyset$, we obtain the equilibrium states for the closed system giving full mass to the survivor set $M^{\infty}$. Then, in this case, the equilibrium states are open.
		
		\item There exists a zooming set $\Lambda_{\phi} \subset \Lambda$ with respect to which the potential $\phi$ is also zooming and has \textbf{\textit{uniqueness}}.
		
		\item The potential $\phi$ has \textbf{\textit{uniqueness}} if, in addition, for every forward invariant subset $\Lambda_{0} \subset \Lambda$  we have
		\[
		\displaystyle  \sup_{\eta \in \mathcal{Z}(\Lambda_{0})^{c}} \bigg{\{} h_{\eta}(f) + \int \phi d\eta \bigg{\}} \neq \sup_{\eta \in \mathcal{Z}(\Lambda_{0})} \bigg{\{} h_{\eta}(f) + \int \phi d\eta \bigg{\}}.
		\]
	\end{itemize}
\end{theorema}

\begin{remark}\label{remark} We observe that
	\begin{itemize}	
		\item Theorem \ref{B} includes a similiar result that can be seen in \cite{AOS} and the proof is along the same lines. Later on, we will see that hyperbolic potentials are equivalent to continuous zooming potentials and we can use \cite{S}{Theorem A} to obtain the same result for open non-uniformly expanding maps. Theorem \ref{B} reduces to closed system if $H = \emptyset$. Theorem \ref{B} is also similar to some of the main results of \cite{PV}. The main difference is the technique and we also include zooming systems with nonexponential Lipschitz contraction.
		
		\item The coding of the system by using the Markov structure and considering the system as closed gives finitely many ergodic equilibrium states which are zooming measures. Since the Markov structure is adapted to the hole $H$, as the zooming set $\Lambda$ is disjoint from $H$, we also obtain the property that the equilibrium states are supported on the survivor set $M^{\infty}$, since the measures are zooming (giving full mass to $\Lambda$ and, so, to $M^{\infty}$). As a consequence, they give null mass to the hole $H$ and we have $P_{f,H}(\phi) = P_{f}(\phi)$, which implies that the equilibrium states are open equilibrium states when the system is open and $\Lambda \cap H = \emptyset$. 
		
		%\item For the Lipschitz contraction $(\alpha_{n})_{n}(r) = a_{n}r$ we have $\sum_{n=1}^{\infty} a_{n} < \infty$ and so $a_{n} \to 0$. Then, we can take $a = \max\{a_{n}\}$.
		
		\item If $\Lambda$ is dense in $M$ and there exists a not dense forward invariant subset $\Lambda_{0} \subset \Lambda$ such that $\mu(\Lambda \backslash \Lambda_{0}) = 0$, we can consider the open system with respect to the zooming set $\Lambda_{0}$.
		
		\item We always take a zooming set not dense in $M$ and disjoint from the hole to guarantee the existence of equilibrium states. It remains to investigate the case where the zooming set is not disjoint from the hole. A problem may occur when the closed system has uniqueness and the hole intersects its support. However, if there exists an equilibrium state for the closed system for which the hole does not intersect its support, we have it as an open equilibrium state.
	\end{itemize}
\end{remark}

\subsection{Pseudo-conformal measures and pseudo-geometric potentials}
Given a measure $\mu$ on $M$, its \textbf{\textit{Jacobian}} is a function $J_{\mu}f : M \to [0,+ \infty)$ such that 
\[
\mu(f(A)) = \int_{A} J_{\mu}f d\mu
\] 
for every $A$ \textbf{\textit{domain of injectivity}}, that is, a measurable set such that $f(A)$ is measurable and $f_{A}: A \to f(A)$ is a bijection.

The class of conformal measures is among the main measures we can choose in order to study the thermodynamic formalism of a given dynamical system $f:M \to M$. They are measures $\eta$ with a Jacobian $J_{\eta}f$ of the type $J_{\eta}f = e^{-\phi}$, where $\phi: M \to \mathbb{R}$ is a potential. It means that 
\[
\eta(f(A)) = \int_{A} e^{-\phi} d\eta,
\]
for every measurable domain of injectivity $A \subset M$. The potentials we will consider are the so-called \textbf{\textit{pseudo-geometric potentials}}, defined as follows
\[
\phi_{t}(x) =
\begin{cases}
	-t \log J_{\mu}f(x), & \text{if }   J_{\mu}f(x) \neq 0;\\
	0 , & \text{if }  J_{\mu}f(x) = 0.\\
\end{cases}
\]

We require that the Jacobian $J_{\mu}f$ is bounded above and the set of the points $x \in M$ where $J_{\mu}f(x) = 0$ has zero measure $\mu$. It means that
\[
\int e^{-\phi_{t}} d\mu = \int_{J_{\mu}f(x) \neq 0} e^{-\phi_{t}} d\mu =  \int_{J_{\mu}f(x) \neq 0} e^{t \log J_{\mu}f(x)} d\mu = \int (J_{\mu}f)^{t} d\mu
\]
and we call the measure $\mu$ \textbf{\textit{pseudo-conformal}}.

For the zooming reference measure $\mu$, Pinheiro showed in [\cite{Pi1}, Theorem C] that there are finitely many ergodic absolutely continuous measure with respect to $\mu$. We denote by $\mathbb{A}$ the set of such measures. Fix $\mu_{0} \in \mathbb{A}$ we know that $J_{\mu}f$ is also a Jacobian for $\mu_{0}$. From the definition of Jacobian, since $\mu_{0}$ is an invariant measure, it follows that $J_{\mu}f(x) \geq 1$, $\mu_{0} \,\, \text{a.e.} \,\, x \in M$. If fact, we have for domain of injectivity $A$
\[
\mu_{0}(A) \leq  \mu_{0}(f(A))=  \int_{A} J_{\mu}f d\mu_{0} \implies J_{\mu}f(x) \geq 1, \mu_{0} \,\, \text{a.e.} \,\, x \in A. 
\]
Since the zooming set $\Lambda$ has full measure $\mu_{0}$ and every zooming pre-ball is a domain of injectivity, we can cover a set of full measure with domains of injectivity. We conclude that $J_{\mu} f(x) \geq 1$ a.e. $x \in M$. We cannot have $J_{\mu} f(x) \equiv 1$ a.e. $x \in M$. So,
\[
\displaystyle \int \log J_{\mu} f d\mu_{0} > 0.
\]
We assume that $h(f) < \infty$ and let
\[
\displaystyle t_{0} : = \max_{\mu_{0} \in \mathbb{A}} \bigg{\{} \frac{h(f)}{-\int \log J_{\mu} f d\mu_{0}} \bigg{\}} \leq 0.
\]
The pressure $P_{f}(\phi_{t})$ is finite in our setting, since the potential $\phi_{t}$ is bounded above.

\begin{definition}
	We say that a map $f:M \to M$ is \textbf{\textit{topologically exact}} if for every open set $V \subset M$ there exists $k \in \mathbb{N}$ such that $f^{k}(V) = M$. 	
\end{definition}

\begin{theorema}
	\label{C}
	Given a measurable open zooming system $f:M \to M$ which is backward separated, if the contraction $(\alpha_{n})_{n}$ satisfies $\alpha_{n}(r) \leq ar$ for some $a \in (0,1)$, every $n \in \mathbb{N}$ and every $r \in [0,+\infty)$ (Lipschitz, for example), with zooming set  $\Lambda$ (previously fixed) and hole $H$ given by Definition \ref{open zooming}. 
	
	\begin{itemize}
		
		\item For $t < t_{0}$ the potential $\phi_{t} = -t \log J_{\mu}f$ is zooming and the induced potential is locally Hölder. 

		\item If the map is topologically exact, there exists a pseudo-conformal measure for the potential $\phi_{t} - P_{f}(\phi_{t})$, equivalent to some equilibrium state.

		%There are \textbf{finitely many}  ergodic equilibrium states and they are zooming measures. 
		%	
		%	\item For open zooming systems, since $\Lambda_{H} \cap H = \emptyset$, we obtain the equilibrium states for the closed system giving full mass to the survivor set $M^{\infty}$. Then, in this case, the equilibrium states are open.
		%   
		%    \item There exists a zooming set $\Lambda_{\phi_{t}} \subset \Lambda$ with respect to which the potential $\phi_{t}$ is also zooming and has uniqueness.
		
	\end{itemize} 
\end{theorema}

\begin{remark}
	Here it holds the same results and explanation as Remark \ref{remark} for Theorem \ref{B}. Moreover, we observe that a similar result can be found in \cite{IT1} for conformal measures and geometric potentials in the context of one-dimensional dynamics. We extend it to the context of metric spaces and open systems, with the proof along the same lines. The main ingredient here is the Markov structure obtained in our Theorem \cite{S}[Theorem A] and the theory of O. Sarig for symbolic dynamics.
\end{remark}

\subsection{Equilibrium stability} Theorem \ref{B} gives equilibrium states for certain types of measurable maps and potentials. Here, we define \textbf{\textit{equilibrium stability}} when both map and potential are continuous.

Let $\mathcal{CM}$ be the family of continuous maps $f: M \to M$ and $\mathcal{CP}$ the family of continuous potentials $\phi : M \to \mathbb{R}$. We say that a family $\mathcal{F} \subset \mathcal{CM} \times \mathcal{CP}$ for which every pair $(f,\phi) \in \mathcal{F}$ has existence of equilibrium states is  \textbf{\textit{equilibrium stable}} if we have continuity of the equilibrium states. It means that if the pair $(f_{n},\phi_{n}), n \geq 1$ has the measure $\mu_{n}$ as an equilibrium state and $f_{n} \to f_{0}$ and $\phi_{n} \to \phi_{0}$, then every accumulation point $\mu_{0}$ of $\mu_{n}$ is an equilibrium state for the pair $(f_{0},\phi_{0})$. The topology on $\mathcal{CM}$ is the $C^{0}$ one, on $\mathcal{CP}$ is also the $C^{0}$ topology and on the measures is the weak-$*$ topology. The topology on $\mathcal{CM} \times \mathcal{CP}$ is the product topology. 

For open systems we have $(f_{n}, \phi_{n}), n \geq 0$ with hole $H_{n}$ and we require that
\[
H_{0} = M\backslash \overline{\bigcup_{n=1}^{\infty} (M\backslash H_{n})}.
\] 
It is compatible with the convergence $\mu_{n} \to \mu_{0}$ because for every $n \geq 1$
\[
\mu_{n}(H_{n}) = 0 \implies \mu_{n}(M\backslash H_{n}) = 1 \implies \mu_{n}\Bigg{(} \overline{\bigcup_{i=1}^{\infty} (M\backslash H_{i})} \Bigg{)} = 1 \implies \mu_{n}(H_{0}) = 0.
\]
For open sets $A \subset M$ we have the following inequality:
\[
\mu_{0}(A) \leq  \liminf_{n \to \infty} \mu_{n}(A) \implies 0 \leq \mu_{0}(H_{0}) \leq  \liminf_{n \to \infty} \mu_{n}(H_{0}) = 0 \implies \mu_{0}(H_{0}) = 0.
\]
Moreover, let $\Lambda_{n}$ be the zooming set of $f_{n}, n 
\geq 0$. We take the new zooming set disjoint from the hole $H_{n}$ given by $\Lambda_{H_{n}}= \Lambda_{n} \cap M_{n}^{\infty}$, where $M_{n}^{\infty}$ is the survivor for the system $f_{n}$ with hole $H_{n}$. So, we can use either Pinheiro's result in \cite{Pi1}[Theorem D] or our Theorem \cite{S}[Theorem A] to obtain finitely many ergodic equilibrium states, each one giving null mass to the hole. So, they all are open equilibrium states as Theorem in \ref{B}. In other words, for every $n \geq 0$ we obtain open equilibrium states $\mu_{n}$, that is, 
\[
\displaystyle h_{\mu_{n}}(f_{n}) + \int \phi_{n} d\mu_{n} = \sup_{\eta \in \mathcal{M}_{f_{n}}(M,H_{n})} \bigg{\{} h_{\eta}(f_{n}) + \int \phi_{n} d\eta \bigg{\}} = P_{f_{n},H_{n}}(\phi_{n}),
\]
where $\mathcal{M}_{f_{n}}(M,H_{n})$ is the set of $f_{n}$-invariant measures giving null mass to the hole $H_{n}$.

By Theorem \ref{B} we have that the following family has finiteness of equilibrium states:
\[
\mathcal{FZ} = \{(f,\phi) \mid f,\phi \,\, \text{are both zooming  and} \,\, \phi \,\, \text{has induced potential locally H\"older} \}.
\]
\[
\mathcal{FZ}_{H} = \{(f,\phi) \mid f,\phi \,\, \text{are both zooming  and} \,\, \phi \,\, \text{has induced potential locally H\"older, hole} \,\, H_{f} \}.
\]
We then establish the following result:

\begin{theorema}
	\label{D}
	$\mathcal{FZ}$ and $\mathcal{FZ}_{H}$ are equilibrium stable. 
\end{theorema}

\begin{remark}
	This control over the holes guarantees compatibily over the convergence of equilibrium states because every equilibrium state must give null mass to the hole.
\end{remark}

With Theorem \ref{C} and stability, we can obtain the following theorem.

\begin{theorema}\label{F}
	\begin{itemize} In the context of Theorems \ref{B} and \ref{C}, we have
		\item If the map is topologically exact and $\phi$ is a zooming potential with locally Hölder induced potential, then the existence of pseudo-conformal measures imply uniqueness of equilibrium state.
		
		\item There exist a countable and an open sets of continuous potentials with uniqueness which are both dense in the set of continuous potentials with finiteness.
	\end{itemize}
\end{theorema}

\section{Uniqueness}\label{Uniqueness}

To obtain uniqueness in Theorem \ref{B}, if the potential is zooming for a pair of forward invariant subsets $\Lambda_{1}, \Lambda_{2} \subset \Lambda$ we have $\Lambda_{1}\cap \Lambda_{2} \neq \emptyset$ and suppose that there exist at least two ergodic zooming equilibrium states $\mu_{1}, \dots, \mu_{k}$. There exist pairwise disjoint invariant subsets $P_{1},\dots, P_{k}$ such that $\mu_{i}(P_{i}) = 1$ for every $i \leq k$. We can take $\Lambda_{i} = \Lambda \cap P_{i}$ to obtain $\mu_{i}$ as unique ergodic zooming equilibrium state. It means that $\mu_{1} = \dots = \mu_{k}$ because $\mu(\Lambda_{i}) = 1$ for every $i \leq k$ implies that $P_{i} \cap P_{j} \neq \emptyset$. The uniqueness is established.

Now, we prove that we always have $\Lambda_{1}\cap \Lambda_{2} \neq \emptyset$ in this case. In fact, otherwise, we would have
\[
\sup_{\eta \in \mathcal{Z}(\Lambda_{1})^{c}} \bigg{\{} h_{\eta}(f) + \int \phi d\eta \bigg{\}} < \sup_{\eta \in \mathcal{Z}(\Lambda_{1})} \bigg{\{} h_{\eta}(f) + \int \phi d\eta\bigg{\}} = h_{\mu_{1}}(f) + \int \phi d\mu_{1}= P_{f}(\phi),
\]
and
\[
\sup_{\eta \in \mathcal{Z}(\Lambda_{2})^{c}} \bigg{\{} h_{\eta}(f) + \int \phi d\eta \bigg{\}} < \sup_{\eta \in \mathcal{Z}(\Lambda_{2})} \bigg{\{} h_{\eta}(f) + \int \phi d\eta \bigg{\}} = h_{\mu_{2}}(f) + \int \phi d\mu_{2} = P_{f}(\phi).
\]
We must have $\Lambda_{1} \cap \Lambda_{2} \neq \emptyset$ because, otherwise, we would have $\mu_{1}(\Lambda_{2}) = 0$ and
\[
P_{f}(\phi) = h_{\mu_{1}}(f) + \int \phi d\mu_{1} \leq \sup_{\eta \in \mathcal{Z}(\Lambda_{2})^{c}} \bigg{\{} h_{\eta}(f) + \int \phi d\eta \bigg{\}} < 
\]
\[
\sup_{\eta \in \mathcal{Z}(\Lambda_{2})} \bigg{\{} h_{\eta}(f) + \int \phi d\eta \bigg{\}} = h_{\mu_{2}}(f) + \int \phi d\mu_{2}= P_{f}(\phi),
\]
which is a contradiction. Finally, if we have
\[
\sup_{\eta \in \mathcal{Z}(\Lambda_{0})^{c}} \bigg{\{} h_{\eta}(f) + \int \phi d\eta \bigg{\}} > \sup_{\eta \in \mathcal{Z}(\Lambda_{0})} \bigg{\{} h_{\eta}(f) + \int \phi d\eta \bigg{\}},
\]
for a certain forward invariant subset $\Lambda_{0} \subset \Lambda$, then we cannot guarantee existence of equlibrium states because the equilibrium states are all zooming measures. It proves uniqueness in Theorem \ref{B}.

\begin{proposition} \label{equivalent}
	Let $\phi_{t}$ be a pseudo-geometric potential. If $\nu$ is a pseudo-conformal measure, then it is equivalent to some equilibrium state $\mu$. Moreover, they have full support and every forward invariant subset $B \subset M$ such that $\nu(B) > 0$ is dense in $M$.
\end{proposition}
\begin{proof}
	Let $\overline{\mu}$ and $\overline{\nu}$ the corresponding lifts to the shift of $\mu$ and $\nu$, respectively. Theorem \cite{S}[Theorem 5.3.3] guarantees the existence of a positive function $h$ such that $d\overline{\mu} = h d\overline{\nu}$. Also, if $Q_{1}, \dots, Q_{k}, \dots$ are the elements of the partition $\mathcal{Q}$ for the inducing scheme $(F,\mathcal{Q})$ we have for a measurable subset $A \subset M$,
	\[
	\mu(A) = \sum_{k=1}^{\infty}
	\sum_{j=0}^{R_{k} - 1} \overline{\mu}(f^{-j}(A) \cap Q_{k}).
	\]
	So, if $\mu(A) > 0$ it implies that $\overline{\mu}(f^{-j}(A) \cap Q_{k}) > 0$ and $\overline{\nu}(f^{-j}(A) \cap Q_{k}) > 0$ for some $j, k \in \mathbb{N}$ and we obtain $\nu(A) > 0$. It means that $\mu$ is absolutely continuous with respect to $\nu$.
	Also, if $\nu(A) > 0$ it implies that $\overline{\nu}(f^{-j}(A) \cap Q_{k}) > 0$ and $\overline{\mu}(f^{-j}(A) \cap Q_{k}) > 0$ for some $j, k \in \mathbb{N}$ and we obtain $\mu(A) > 0$. It means that $\nu$ is absolutely continuous with respect to $\mu$. Then, $\mu$ and $\nu$ are equivalent.
	
	Once the map is topologically exact, given an open set $A \subset M$ we have $M = f^{m}(A)$ for some $m \in \mathbb{N}$ and
	\[
	0 < \nu(f^{m}(A)) \leq \int_{A} (J_{\mu}f^{m})^{t} d\nu \implies \nu(A) > 0,
	\] 
	and $\nu$ has full support. Also, $\mu$ has full support for every equilibrium state $\mu$ of the pseudo-geometric potential $\phi_{t}$.
	
	Let $B \subset M$ be a forward invariant subset such that $\nu(B) > 0$. It means that $\mu(B) > 0$ and then $\mu(B) = 1$ because $\mu$ is ergodic. Since every full measure subset is dense in the support, we have $B$ dense in $M$.	
\end{proof}

For uniqueness in Theorem \ref{F}, by Theorem \ref{B}, since the  potential $\phi$ is zooming with locally Hölder induced potential, we have finiteness: $\mu_{1}, \dots, \mu_{k}$. Take a subset $\Lambda \supset \Lambda_{1} \cup \dots \cup \Lambda_{k}$ where, for each $i \leq k$, $\Lambda_{i}$ is forward invariant such that $\mu_{i}(\Lambda_{i}) = 1$ and they are pairwise disjoint. As the map is topologically exact, by Theorem \ref{C}, there exists a pseudo-conformal measure $\nu$ and if 
\[
0 <  \nu(\Lambda_{1} \cup \dots \cup \Lambda_{k}) \leq \nu(\Lambda_{1}) + \dots + \nu(\Lambda_{k}),
\]
then, $\nu(\Lambda_{i}) > 0$ for some $i \leq k$ and also $\mu(\Lambda_{i}) > 0$, which implies $\mu(\Lambda_{i}) = 1$, where $\mu$ is an ergodic equilibrium state for the pseudo-geometric potential $\phi_{t}$ equivalent to $\nu$ by Proposition \ref{equivalent}. It means that for at most one $i \leq k$ we have $\nu(\Lambda_{i}) > 0$ because they are pairwise disjoint.

If $\Lambda \supset \Lambda_{1}' \cup \dots \cup \Lambda_{k}'$ where, for each $i \leq k$, $\Lambda_{i}'$ is forward invariant such that $\mu_{i}(\Lambda_{i}') = 1$ and they are pairwise disjoint, such that
\[
0 <  \nu(\Lambda_{1}' \cup \dots \cup \Lambda_{k}'),
\]
then $\mu(\Lambda_{j}')=1$ for some $j \leq k$ and it implies $\mu(\Lambda_{i} \cap \Lambda_{j}')=1$. But also, we obtain the following union $\Lambda \supset (\Lambda_{1} \cap \Lambda_{1}') \cup \dots \cup (\Lambda_{k} \cap \Lambda_{k}')$ such that $\mu_{i}(\Lambda_{i} \cap \Lambda_{i}') = 1$ and
\[
0 <  \nu((\Lambda_{1} \cap \Lambda_{1}') \cup \dots \cup (\Lambda_{k} \cap \Lambda_{k}')),
\]
and it implies either $\mu(\Lambda_{i} \cap \Lambda_{i}') = 1$ or $\mu(\Lambda_{j} \cap \Lambda_{j}') = 1$. But then $\mu(\Lambda_{i} \cap \Lambda_{j}) = 1$ and $\Lambda_{i} \cap \Lambda_{j} \neq \emptyset$, which is a contradiction, unless $j = i$.

Take the following decomposition $\Lambda = \Lambda_{1} \cup \dots \cup (P_{i} \cap \Lambda) \cup \dots \cup \Lambda_{k}$, where $\mu_{i}(\Lambda_{j}) = 1$ for $j \neq i$ and the subsets are pairwise disjoint. By the above discussion, we obtain $\mu(P_{i} \cap \Lambda) =1$ and so $\mu = \mu_{i}$ because both measures are ergodic.

We conclude that if a potential $\phi$ does not have uniqueness, so it has as equilibrium states all the equilibrium states of each pseudo-geometric potential $\phi_{t}$. It is a contradiction because all pseudo-geometric potentials cannot have commom equilibrium states. In fact, if $\eta$ were equilibrium state for  $\phi_{t}$ and every $t < t_{0}$, then for every invariant measure $\lambda$
\[
h_{\eta}(f) - t\int \log J_{\mu}f d\eta \geq h_{\lambda}(f) - t\int \log J_{\mu}f  d\lambda \implies \frac{h_{\eta}(f) - h_{\lambda}(f)}{\int \log J_{\mu}f d\lambda - \int \log J_{\mu}f d\eta} \geq -t,
\]
taking $\int \log J_{\mu}f d\lambda > \int \log J_{\mu}f d\eta$ and it is a contradiction if $t \to -\infty$. So, the potential $\phi$ must have uniqueness. Otherwise, it would have infintely many equilibrium states. The uniqueness is established.

\begin{remark}
	In order to guarantee that such a measure $\lambda$ exists  such that $\int \log J_{\mu}f d\lambda > \int \log J_{\mu}f d\eta$ we can take the Jacobian attaining its unique maximum at a fixed point $x_{0} \in M$ and we have for $\lambda = \delta_{x_{0}}$ (Dirac measure at $x_{0}$)
	\[
	\int \log J_{\mu}f d\eta < \max \log J_{\mu}f = \int \log J_{\mu}f d\delta_{x_{0}} = \int \log J_{\mu}f d\lambda.
	\]
	Otherwise we would have $\eta = \delta_{x_{0}}$.
\end{remark}
\section{Hyperbolic Potentials}\label{Hyperbolic}

In this section, we extend the notion of hyperbolic potentials which appears in \cite{AOS} for exponential contractions and for continuous maps to the general case of $f:M \to M$ being a zooming map and  the contraction $(\alpha_{n})_{n}$ satisfying $\alpha_{n}(r) \leq ar$ for some $a \in (0,1)$, every $n \in \mathbb{N}$ and every $r \in [0,+\infty)$ (Lipschitz contractions, for example). Also, we show that hyperbolic potentials are \textbf{equivalent} to continuous zooming potentials and give an example of a class of hyperbolic potentials. By proving that the null potential is zooming we obtain, in particular, the existence and uniqueness of measures of maximal entropy for zooming maps with general contractions, answering this question for the important class of maps known as Viana maps.

We stress the fact that in the case of a zooming map with non dense zooming set we can consider this section for the respective open zooming system.

We begin by recalling what we mean by relative pressure, which is a notion of pressure for non-compact sets, as the case of zooming sets in general.

\subsection{Topological pressure}

We recall the definition of relative pressure for non-compact sets by dynamical balls, as it is given in \cite{ARS}.
Let $M$ be a compact metric space. Consider $f:M \to M$ and $\phi: M \to \mathbb{R}$. Given $\delta > 0$, $n \in \mathbb{N}$ and $x \in M$, we define the 
\textbf{\textit{dynamical ball}} $B_{\delta}(x,n)$ as 
\[
B_{\delta}(x,n): = \{y \in M |d(f^{i}(x),f^{i}(y)) < \delta, \,\, \text{for} \,\, 0 \leq i \leq n\}.
\]
Consider for each $N \in \mathbb{N}$, the set
\[
\mathcal{F}_{N} = \left\{B_{\delta}(x,n) |x \in M, n \geq N\right\}.
\]
Given $\Lambda \subset M$, denote by $\mathcal{F}_{N}(\Lambda)$ the finite or countable families of elements in $\mathcal{F}_{N}$ that cover $\Lambda$. Define for $n \in \mathbb{N}$
\[
S_{n}\phi(x) = \phi(x) + \phi(f(x)) + \dots + \phi(f^{n-1}(x)).
\]
and
\[
\displaystyle R_{n,\delta}\phi(x) = \sup_{y \in B_{\delta}(x,n)} S_{n}\phi(y).
\]
Given a $f$-invariant set $\Lambda \subset M$, not necessarily compact, define for each $\gamma > 0$
\[
\displaystyle m_{f}(\phi, \Lambda, \delta, N, \gamma) = \inf_{\mathcal{U} \in \mathcal{F}_{N}(\Lambda)} \left\{ \sum_{B_{\delta}(y,n) \in \mathcal{U}}
e^{-\gamma n + R_{n,\delta}\phi(y)} \right\}.
\]
Define also
\[
\displaystyle m_{f}(\phi, \Lambda, \delta, \gamma) = \lim_{N \to + \infty} m_{f}(\phi, \Lambda, \delta, N, \gamma).
\]
and
\[
P_{f}(\phi, \Lambda, \delta) = \inf \{\gamma > 0 |m_{f}(\phi, \Lambda, \delta, \gamma) = 0\}.
\]
Finally, define the \textbf{\textit{relative pressure}} of $\phi$ on $\Lambda$ as
\[
P_{f}(\phi,\Lambda) = \lim_{\delta \to 0} P_{f}(\phi, \Lambda, \delta).
\]
The \textbf{\textit{topological pressure}} of $\phi$ is, by definition, $P_{f}(\phi) = P_{f}(\phi, M)$ and satisfies
\begin{eqnarray}
	\label{Pressures}
	P_{f}(\phi) = \sup \{P_{f}(\phi,\Lambda), P_{f}(\phi,\Lambda^{c})\}
\end{eqnarray}
where $\Lambda^{c}$ denotes the complement of $\Lambda$ on $M$. We refer the reader to \cite{Pe2} for the proof of~\eqref{Pressures} and for additional properties of the pressure.
See also \cite{W} for a proof of the fact that
\[
\displaystyle P_{f}(\phi) = \sup_{\mu \in \mathcal{M}_{f}(M)} \bigg{\{} h_{\mu}(f) + \int \phi d \mu \bigg{\}}.
\]

\subsection{Hyperbolic potentials}

Given a continuous zooming map $f:M \to M$ with general contractions, we say that a   continuous function $\phi : M \to \mathbb{R}$ is a \emph{hyperbolic potential} if the topological pressure $P_{f}(\phi)$ is located on the zooming set $\Lambda$, i.e.
\[
P_{f}(\phi,\Lambda^{c}) < P_{f}(\phi).
\]
This notion is extends the notion of hyperbolic potential in \cite{RV}, since they define the expanding set from an average and, later on, they prove the property of expansion with neighbourhoods by proving a distortion control in Lemma 3.6 and that the expanding set is a zooming set.

In \cite{IRL} I. Inoquio-Renteria and J. Rivera-Letelier use the term hyperbolic potential for the first time. As in \cite{LRL}, where H. Li and J. Rivera-Letelier consider other type of hyperbolic potentials for one-dimensinal dynamics. In their context, $\phi$ is a hyperbolic potential if
\begin{equation}\label{Li-Rivera}
	\displaystyle \sup_{\mu \in \mathcal{M}_{f}(M)} \int \phi d\mu < P_{f}(\phi).	
\end{equation}
We claim that these type of hyperbolic potentials are zooming. When the map is one-dimensional, a measure being zooming (or expanding) means that the Lyapunov exponent is positive. Otherwise, it is negative or zero. By Ruelle's inequality, we obtain that the entropy is negative or zero. In this case, for a measure $\mu$ that is not zooming, we obtain
\[
\displaystyle \sup_{\mu \in \mathcal{Z}(\Lambda)^{c}} \bigg{\{} h_{\mu}(f) + \int \phi d \mu \bigg{\}} \leq \sup_{\mu \in \mathcal{Z}(\Lambda)^{c}}\bigg{\{}\int \phi d \mu \bigg{\}} \leq 
\]
\[
\sup_{\mu \in \mathcal{M}_{f}(M)} \int \phi d\mu < P_{f}(\phi)=\sup_{\mu \in \mathcal{Z}(\Lambda)} \bigg{\{} h_{\mu}(f) + \int \phi d \mu \bigg{\}}.	
\]
It means that this type of potential is zooming (and hyperbolic, as we will see in this section) as defined above and we can use our Theorem \ref{B} to obtain finitely many ergodic equilibrium states which are zooming measures.

Another type of hyperbolic potentials considered, is taking $\phi$ such that
\begin{equation}\label{Przytycki-Rivera}
	\sup \phi < P_{f}(\phi).
\end{equation}
We can easily see that condition \ref{Przytycki-Rivera} implies \ref{Li-Rivera}.
We observe that every hyperbolic potential in our context is zooming, as we can see in the following proposition.

\begin{proposition}\label{HZ}
	Let $\phi$ be  a hyperbolic potential. If $\mu$ is an ergodic probability measure such that $h_{\mu}(f) + \int \phi d\mu > P_{f}(\phi,\Lambda^{c})$, then $\mu(\Lambda)=1$.
\end{proposition}

%\begin{proof}
%	Since $\Lambda$ is an invariant set and $\mu$ is an ergodic probability measure, we have $\mu(\Lambda) = 0$ or $\mu(\Lambda) = 1$. Since the potential $\phi$ is ergodic, we get
%	\[
%	h_{\mu}(f) + \int \phi d\mu > P_{f}(\phi,\Lambda^{c}) \geq \sup_{\nu(\Lambda^{c})=1}
%	\bigg{\{}h_{\nu}(f) + \int_{\Lambda^{c}} \phi d\nu\bigg{\}}.
%	\]
%	(For the second inequality, see  \cite[Theorem A2.1]{Pe2})
%	So, we cannot have $\mu(\Lambda^{c}) = 1$ and we obtain $\mu(\Lambda) = 1$, i.e. $\mu$ is an zooming measure.
%	
%\end{proof}

The main result of this section is the following theorem, which establishes the equivalence between hyperbolic and zooming potentials.

\begin{theorem}\label{HYPERZOOM}
	Let $f:M \to M$  a continuous zooming map and  the contraction $(\alpha_{n})_{n}$ satisfying $\alpha_{n}(r) \leq ar$ for some $a \in (0,1)$, every $n \in \mathbb{N}$ and every $r \in [0,+\infty)$ (Lipschitz contractions, for example)  and $\phi : M \to \mathbb{R}$ a continuous potential. Then $\phi$ is a hyperbolic potential if, and only if, it is a zooming potential.
\end{theorem}

We divide the proof of Theorem \ref{HYPERZOOM} into some Lemmas. The first Lemma proves that both the sets of hyperbolic and continuous zooming potentials are open in the topology of the supremum norm.

\begin{lemma} \label{OPEN}
	Let $f:M \to M$ a continuous zooming map and the contraction $(\alpha_{n})_{n}$ satisfying $\alpha_{n}(r) \leq ar$ for some $a \in (0,1)$, every $n \in \mathbb{N}$ and every $r \in [0,+\infty)$ (Lipschitz contractions, for example). Denote by $\mathcal{HP}$ the set of hyperbolic potentials and $\mathcal{ZP}$ the set of continuous zooming potentials. We have that both sets $\mathcal{HP}$ and $\mathcal{ZP}$ are open in the topology of the supremum norm $\parallel \cdot \parallel_{\infty}$. 
\end{lemma}

We observe that Proposition \ref{HZ} guarantees that $\mathcal{HP} \subset \mathcal{ZP}$. In order to prove Theorem \ref{HYPERZOOM} it remains to show that $\mathcal{ZP} \subset \mathcal{HP}$. In the next Lemma, we show this by showing that $\mathcal{ZP} \subset \overline{\mathcal{HP}}$ and using that $\mathcal{HP} \subset \mathcal{ZP}$ are open sets.

\begin{lemma}\label{ZOOMHYPER}
	With the notation of Lemma \ref{OPEN}, we have that $\mathcal{ZP} \subset \overline{\mathcal{HP}}$. 
\end{lemma}

\begin{theorem}\label{ZER}
	Let $f:M \to M$ a continuous zooming map and  the contraction $(\alpha_{n})_{n}$ satisfying $\alpha_{n}(r) \leq ar$ for some $a \in (0,1)$, every $n \in \mathbb{N}$ and every $r \in [0,+\infty)$ (Lipschitz contractions, for example) such that the topological entropy is positive ($h(f) > 0$). If there exists a zooming potential $\phi_{0}$ with locally Hölder induced potential, then the null potential $\phi \equiv 0$ is  zooming (and also hyperbolic). In particular, by Theorem \ref{B} there exist finitely many ergodic measures of maximal entropy which are zooming measures.
\end{theorem}

\begin{corollary}\label{BIR}
	Let $\phi: M \to \mathbb{N}$ with its Birkhoff sums uniformly bounded, that is, there exists $r > 0$ such that
	\[
	|S_{n}\phi(x)| < r, \text{for all} \, \, n \in \mathbb{N}, \text{for all} \, \, x \in M.
	\]
	Then, $\phi$ is a zooming (and hyperbolic) potential.
\end{corollary}
%\begin{proof}
%	By Birkhoff's Ergodic Theorem we have that $\int \phi d\eta = 0$ for every $f$-invariant probability $\eta$. By Theorem \ref{ZER} the null potential is zooming. It implies that 
%	\[
%	\displaystyle  \sup_{\eta \in \mathcal{Z}(\Lambda)^{c}} \bigg{\{} h_{\eta}(f) + \int \phi d\eta \bigg{\}} = \sup_{\eta \in \mathcal{Z}(\Lambda)^{c}} \bigg{\{} h_{\eta}(f) \bigg{\}} < \sup_{\eta \in \mathcal{Z}(\Lambda)} \bigg{\{} h_{\eta}(f) \bigg{\}} = \sup_{\eta \in \mathcal{Z}(\Lambda)} \bigg{\{} h_{\eta}(f) + \int \phi d\eta \bigg{\}}.
%	\]  
%	So, $\phi$ is a zooming (and hyperbolic) potential.
%\end{proof}
%
%\begin{remark}
%	The previous lemma proved that the potential $\phi$ is zooming (and hyperbolic) if we have $\int \phi d \eta = 0$ for every invariant measure $\eta$. In particular, if we have the Birkhoff sums uniformly bounded.
%\end{remark}

\begin{corollary}\label{mme}
	Under the conditions of Theorem \ref{ZER} and, in addition, if the map is topologically exact, there exists a unique measure of maximal entropy. 
\end{corollary}
%\begin{proof}
%	By Theorem \ref{C}, we have uniqueness since the null potential is zooming with locally Hölder induced potential and the map is topologically exact.
%\end{proof}
A very important class of zooming maps is the class of Viana maps, defined in  section \ref{Examples}. The problem concerning the existence and uniqueness of the measure of maximal entropy has been studied for several authors. In \cite{ALP} the authors prove that there exist at most countably many of them. In \cite{AOS} the authors prove existence and finiteness. A proof of existence and uniqueness is announced in \cite{PV} and in \cite{L}with different approaches. We obtain it as a corollary of Theorem \ref{ZER}.
\begin{corollary}
	Let $f:S^{1} \times I \to S^{1} \times I$ be a Viana map. There exists a unique measure of maximal entropy for $f$.
\end{corollary}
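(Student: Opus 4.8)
The strategy is to reduce the statement to the preceding corollary, so it suffices to check that a Viana map $f\colon S^{1}\times I\to S^{1}\times I$ satisfies its three hypotheses: that $f$ is a continuous non-uniformly expanding map, that its expanding set $H$ and the complement $H^{c}$ are both dense in $S^{1}\times I$, and that $h(f)>0$. That $f$ is continuous non-uniformly expanding in the sense of Section~2 is precisely the content of properties~(1)--(2) above once these differentiable statements are rephrased in terms of hyperbolic pre-balls, with Lebesgue measure as reference measure (which has bounded distortion); this translation is the point of the remark above and is detailed in \cite{AOS,S} (see also \cite{A,Pi1,V}). In particular $H$ already has full Lebesgue measure, and since Lebesgue measure charges every nonempty open subset of $S^{1}\times I$, the set $H$ is dense.

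To see that $h(f)>0$, I would use property~(5): the SRB measure $\mu$ is absolutely continuous and ergodic, hence carried by the full-Lebesgue-measure set on which the expansion estimate of property~(1) holds, so by Oseledets' theorem every Lyapunov exponent of $\mu$ is bounded below by $\lambda>0$. Since $f$ is $C^{3}$ and $\mu\ll\operatorname{Leb}$, the entropy (Pesin/Ledrappier) formula gives $h_{\mu}(f)=\sum_{i}\lambda_{i}(\mu)\geq\lambda>0$, whence $h(f)\geq h_{\mu}(f)>0$ by the variational principle. (Alternatively, when $f$ is a skew product over $g$ one can simply note that $g$, of degree $d\geq16$, is a topological factor of $f$, so $h(f)\geq\log d>0$.)

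The real work is the density of $H^{c}$. Let $\mathcal{C}\subset S^{1}\times I$ be the critical set of $f$, which is nonempty. I claim first that every preimage of $\mathcal{C}$ lies in $H^{c}$: given $z$ with $f^{n}(z)\in\mathcal{C}$, take $n$ minimal, so that $z,f(z),\dots,f^{n-1}(z)\notin\mathcal{C}$ and $f^{n}$ is a local homeomorphism at $z$. If some $m>n$ were a $(\sigma,\delta)$-hyperbolic time for $z$, then $f^{m}$, hence also $f^{n}$, would be injective on the hyperbolic pre-ball $V_{m}(z)$, so $f^{n}(V_{m}(z))$ would contain an open neighbourhood of the critical point $f^{n}(z)$; but on such a neighbourhood $f$ acts as a fold (by non-degeneracy of $\mathcal{C}$) and is not injective, which produces two distinct points of $V_{m}(z)$ on which $f^{n+1}$, and therefore $f^{m}$, agree --- a contradiction. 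Hence $z$ has at most $n$ hyperbolic times, so its frequency of hyperbolic times is $0$ and $z\in H^{c}$. Second, $\bigcup_{n\geq0}f^{-n}(\mathcal{C})$ is dense: by the strong topological transitivity of $f$ (property~(4)), the forward orbit of every nonempty open $U$ eventually meets the nonempty set $\mathcal{C}$, i.e.\ $U\cap f^{-N}(\mathcal{C})\neq\emptyset$ for some $N$. Therefore $\overline{H^{c}}\supset\overline{\bigcup_{n}f^{-n}(\mathcal{C})}=S^{1}\times I$.

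With all three hypotheses verified, the preceding corollary applies and yields a unique measure of maximal entropy for $f$. I expect the density of $H^{c}$ to be the main obstacle, and within it the justification that $f$ fails to be injective on every neighbourhood of a critical point: this is exactly where one must invoke the non-degeneracy of the critical set, i.e.\ the technical description of Viana maps that was deliberately suppressed above.
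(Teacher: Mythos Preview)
Your reduction to the preceding corollary is exactly how the paper intends Corollary~2 to be read: the paper gives no separate proof at all, simply listing properties (1)--(6) of Viana maps in Section~2.4 and stating the corollary, tacitly assuming that these properties feed directly into Corollary~1. In that sense your approach coincides with the paper's.

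Where you go further is in actually verifying the hypotheses, and this is worth pointing out. The density of $H$ and the positivity of $h(f)$ are routine (your Pesin/SRB argument, or the factor-over-$g$ shortcut in the skew-product case, are both fine), but the density of $H^{c}$ is a genuine gap in the paper's exposition that you fill correctly: the observation that preimages of the critical set can have only finitely many hyperbolic times (because a hyperbolic pre-ball would force $f^{m}$ to be injective across a neighbourhood of a fold point), combined with strong topological transitivity to make $\bigcup_{n}f^{-n}(\mathcal{C})$ dense, is the right mechanism and is nowhere spelled out in the paper. Your caveat at the end is well placed: the step ``$f$ is not injective on any neighbourhood of a point of $\mathcal{C}$'' is precisely the non-degeneracy of the critical set, which the paper mentions but does not unpack.
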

%\begin{proof}
%	We have that every Viana map is a zooming map with exponential contractions with positive topological entropy. Moreover, it is topologically exact restricted to the attractor. The corollary follows by Theorem \ref{ZER} and Corollary \ref{mme}.
%\end{proof}

\begin{remark}
	In \cite[Proposition 12.2]{ALP} the authors establish that for Viana maps we have the following result, among others: if $\mu$ is an $f$-invariant measure such that $h_{\mu}(f) \geq h_{SRB}(f)$ where $SRB$ denotes the unique SRB measure for Viana maps, then the measure $\mu$ is hyperbolic. It implies that
	\[
	\sup_{\nu \in \mathcal{Z}(\Lambda)^{c}}\{h_{\nu}(f)\} \leq \sup_{\mu \in \mathcal{Z}(\Lambda)}\{h_{\mu}(f)\} = h(f).
	\]
	If the inequality is strict, it means that the null potential is zooming. Otherwise, we still can take a sequence of zooming measures $\mu_{n}$ such that $h_{\mu_{n}}(f) \to h(f)$ and the proof to find equilibrium states proceeds analogously. We then find uniqueness of the measure of maximal entropy in any case.
\end{remark}

With Theorem \ref{ZER} and the next lemma, we can see that the constant potentials are all hyperbolic (and zooming).

\begin{lemma} 
	We have that $P_{\Lambda}(\phi + c) = P_{\Lambda}(\phi) + c$, for all potential $\phi$ and constant $c \in \mathbb{R}$. 
\end{lemma}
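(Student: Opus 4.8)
The plan is to unwind the definition of relative pressure and check that adding a constant $c$ to the potential multiplies every term in the defining sums by the uniform factor $e^{cn}$ modulo the $\gamma$-exponent. First I would recall that $S_n(\phi+c)(y) = S_n\phi(y) + cn$ for every $y$ and every $n$, hence $R_{n,\delta}(\phi+c)(y) = R_{n,\delta}\phi(y) + cn$ as well, since the supremum over $B_\delta(y,n)$ of $S_n\phi(y') + cn$ is just $R_{n,\delta}\phi(y) + cn$. Consequently, for any covering $\mathcal{U} \in \mathcal{F}_N^\delta(\Lambda)$,
\[
\sum_{B_\delta(y,n)\in\mathcal{U}} e^{-\gamma n + R_{n,\delta}(\phi+c)(y)} = \sum_{B_\delta(y,n)\in\mathcal{U}} e^{-(\gamma-c) n + R_{n,\delta}\phi(y)},
\]
so that $m_f(\phi+c,\Lambda,\delta,N,\gamma) = m_f(\phi,\Lambda,\delta,N,\gamma-c)$, and letting $N\to\infty$ gives $m_f(\phi+c,\Lambda,\delta,\gamma) = m_f(\phi,\Lambda,\delta,\gamma-c)$.

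Next I would pass to the critical exponents. From the identity above, the set of $\gamma>0$ for which $m_f(\phi+c,\Lambda,\delta,\gamma)=0$ is exactly the translate by $c$ of the set of $\gamma'>0$ for which $m_f(\phi,\Lambda,\delta,\gamma')=0$; taking infima yields $P_\Lambda(\phi+c,\delta) = P_\Lambda(\phi,\delta) + c$. Finally, taking $\delta\to 0$ gives $P_\Lambda(\phi+c) = P_\Lambda(\phi) + c$, as desired. The argument applies verbatim to $\Lambda = M$, $\Lambda = H$ and $\Lambda = H^c$.

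The only genuine subtlety — and the step I would be most careful about — is bookkeeping around the sign and positivity constraint $\gamma>0$ in the definition of $P_\Lambda$: if $c<0$ one must make sure that the shifted threshold $\gamma - c$ stays in the range where the infimum is computed, which it does since $\gamma - c > \gamma > 0$, but if $c>0$ one should note that the relevant infimum defining $P_\Lambda(\phi)$ may a priori involve values $\gamma' \le 0$; this is harmless because $P_\Lambda$ is defined as an infimum over positive $\gamma$ and the monotonicity of $\gamma\mapsto m_f(\phi,\Lambda,\delta,\gamma)$ (larger $\gamma$ makes the sums smaller, hence more likely to vanish) ensures the infimum over positive reals of the translated family equals the translate of the infimum. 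Combined with \Cref{NUL}, the identity immediately gives $P_{H^c}(c) = c < P_H(c) = P(c)$ for every constant $c$, so every constant potential is hyperbolic.
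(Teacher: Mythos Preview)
Your proof is correct and follows essentially the same route as the paper: both start from $R_{n,\delta}(\phi+c)=R_{n,\delta}\phi+cn$, deduce $m_f(\phi+c,\Lambda,\delta,N,\gamma)=m_f(\phi,\Lambda,\delta,N,\gamma-c)$, and then pass to the critical exponent and the limit $\delta\to 0$. Your treatment is in fact slightly more careful than the paper's, since you explicitly address the positivity constraint $\gamma>0$ via the monotonicity of $\gamma\mapsto m_f(\phi,\Lambda,\delta,\gamma)$, a point the paper leaves implicit.
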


%\begin{proof}
%	Let $\psi := \phi + c$. We have  $R_{n,\delta}\psi(x) = R_{n,\delta}\phi(x) + nc$. So,
%	\begin{eqnarray*}
%		\displaystyle m_{f}(\psi, \Lambda, \delta, N, \gamma) & = & \inf_{\mathcal{U} \in \mathcal{F}_{N}(\Lambda)} \bigg{\{} \sum_{B_{\delta}(y,n) \in \mathcal{U}}
%		e^{-\gamma n + R_{n,\delta}\psi(y)} \bigg{\}} \\
%		= \inf_{\mathcal{U} \in \mathcal{F}_{N}(\Lambda)} \bigg{\{} \sum_{B_{\delta}(y,n) \in \mathcal{U}}
%		e^{-(\gamma - c)n + R_{n,\delta}\phi(y)} \bigg{\}} & = & m_{f}(\phi, \Lambda, \delta, N, \gamma - c).
%	\end{eqnarray*}
%	It implies that $m_{f}(\psi, \Lambda, \delta, \gamma) = m_{f}(\phi, \Lambda, \delta, N, \gamma - c)$. If $\gamma > 0$ is such that 
%	$m_{f}(\psi, \Lambda, \delta, \gamma)=0$, then $m_{f}(\phi, \Lambda, \delta, N, \gamma - c)=0 \Rightarrow \gamma - c \geq P_{\Lambda}(\phi,\delta)$ or
%	$\gamma \geq P_{\Lambda}(\phi,\delta) + c \Rightarrow P_{\Lambda}(\psi,\delta) \geq P_{\Lambda}(\phi,\delta) + c$.
%	
%	In the other hand, if $m_{f}(\phi, \Lambda, \delta, N, \beta) = 0$, then $m_{f}(\psi, \Lambda, \delta, \beta + c) = 0$ and 
%	$P_{\Lambda}(\psi,\delta) \leq \beta + c \Rightarrow P_{\Lambda}(\psi,\delta) \leq P(\phi,\delta) + c$. It means that
%	$P_{\Lambda}(\psi,\delta) = P(\phi,\delta) + c$ or $P_{\Lambda}(\psi) = P(\phi) + c$.
%\end{proof}

The previous lemma also shows that if $\phi$ is a hyperbolic (and zooming) potential, so is $\phi + c, \text{for all} \, \, c \in \mathbb{R}$. We can also obtain the following lemma.

\begin{lemma}
	If $\phi \leq \psi$, then $P_{\Lambda}(\phi) \leq P_{\Lambda}(\psi)$. 
\end{lemma}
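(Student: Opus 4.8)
The plan is to reduce everything to a term-by-term comparison at the level of the set functions $m_f$, exploiting that the admissible covers, the dynamical balls, and the exponential factors $e^{-\gamma n}$ are identical for $\phi$ and $\psi$; the only ingredient that changes is the Birkhoff-sum correction $R_{n,\delta}$, and it changes monotonically.

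First I would observe that $\phi \leq \psi$ pointwise forces $S_n\phi(x) \leq S_n\psi(x)$ for every $x \in M$ and every $n \geq 1$, simply by adding the inequality $\phi(f^i(x)) \leq \psi(f^i(x))$ for $0 \leq i \leq n-1$. Taking the supremum over $y \in B_{\delta}(x,n)$ on both sides then gives $R_{n,\delta}\phi(x) \leq R_{n,\delta}\psi(x)$ for all $x \in M$, all $n \geq 1$ and all $\delta > 0$. Feeding this into the definition of $m_f$: for fixed $\gamma, N, \delta$ and any cover $\mathcal{U} \in \mathcal{F}_{N}^{\delta}(\Lambda)$, each summand obeys $e^{-\gamma n + R_{n,\delta}\phi(y)} \leq e^{-\gamma n + R_{n,\delta}\psi(y)}$, hence the entire sum over $\mathcal{U}$ for $\phi$ is dominated by the corresponding sum for $\psi$. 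Since $\mathcal{F}_{N}^{\delta}(\Lambda)$ is the same family for both potentials, passing to the infimum over $\mathcal{U}$ yields $m_f(\phi,\Lambda,\delta,N,\gamma) \leq m_f(\psi,\Lambda,\delta,N,\gamma)$, and letting $N \to +\infty$ gives $m_f(\phi,\Lambda,\delta,\gamma) \leq m_f(\psi,\Lambda,\delta,\gamma)$.

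Finally I would translate this inequality of nonnegative set functions into an inequality of critical exponents. If $\gamma > 0$ satisfies $m_f(\psi,\Lambda,\delta,\gamma) = 0$, then $0 \leq m_f(\phi,\Lambda,\delta,\gamma) \leq m_f(\psi,\Lambda,\delta,\gamma) = 0$, so the same $\gamma$ annihilates $m_f(\phi,\cdot)$; thus $\{\gamma > 0 \mid m_f(\psi,\Lambda,\delta,\gamma)=0\} \subseteq \{\gamma > 0 \mid m_f(\phi,\Lambda,\delta,\gamma)=0\}$, and taking infima of these sets reverses the inclusion to $P_{\Lambda}(\phi,\delta) \leq P_{\Lambda}(\psi,\delta)$. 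Letting $\delta \to 0$ gives $P_{\Lambda}(\phi) \leq P_{\Lambda}(\psi)$, as claimed. There is no genuine obstacle here; the only point deserving a line of care is that the domination must be performed cover-by-cover \emph{before} taking the infimum, together with the trivial but essential remark that $\phi$ and $\psi$ range over exactly the same covering families $\mathcal{F}_{N}^{\delta}(\Lambda)$, which is immediate from the definition since those families depend only on $f$, $\delta$, $N$ and $\Lambda$.
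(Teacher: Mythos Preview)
Your argument is correct and follows essentially the same route as the paper: monotonicity of $R_{n,\delta}$, hence of $m_f(\cdot,\Lambda,\delta,N,\gamma)$ cover by cover, then passage to the infimum, the limit in $N$, the critical exponent, and the limit in $\delta$. The only difference is that you spell out the Birkhoff-sum step and the set-inclusion argument for the critical exponents slightly more carefully than the paper does.
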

%\begin{proof}
%	If $\phi \leq \psi$, we obtain $R_{n,\delta}\phi(x) \leq R_{n,\delta}\psi(x)$. So,
%	\begin{eqnarray*}
%		\displaystyle m_{f}(\phi, \Lambda, \delta, N, \gamma) & = & \inf_{\mathcal{U} \in \mathcal{F}_{N}(\Lambda)} \bigg{\{} \sum_{B_{\delta}(y,n) \in \mathcal{U}}
%		e^{-\gamma n + R_{n,\delta}\phi(y)} \bigg{\}} \\ 
%		\leq \inf_{\mathcal{U} \in \mathcal{F}_{N}(\Lambda)} \bigg{\{} \sum_{B_{\delta}(y,n) \in \mathcal{U}}
%		e^{-\gamma n + R_{n,\delta}\psi(y)} \bigg{\}} & = & m_{f}(\psi, \Lambda, \delta, N, \gamma),
%	\end{eqnarray*}
%	which implies that
%	\[
%	\displaystyle m_{f}(\phi, \Lambda, \delta, \gamma) = \lim_{N \to + \infty} m_{f}(\phi, \Lambda, \delta, N, \gamma) \leq 
%	\lim_{N \to + \infty} m_{f}(\psi, \Lambda, \delta, N, \gamma) = m_{f}(\psi, \Lambda, \delta, \gamma).
%	\]
%	It implies that $m_{f}(\phi, \Lambda, \delta, \gamma) = 0$ if $m_{f}(\psi, \Lambda, \delta, \gamma) = 0$, that is,
%	\[
%	P_{\Lambda}(\phi, \delta) = \inf \{\gamma  \mid m_{f}(\phi, \Lambda, \delta, \gamma) = 0\} \leq
%	\inf \{\gamma  \mid m_{f}(\psi, \Lambda, \delta, \gamma) = 0\} = P_{\Lambda}(\psi, \delta).
%	\]
%	Finally,
%	\[
%	P_{\Lambda}(\phi) = \lim_{\delta \to 0} P_{\Lambda}(\phi, \delta) \leq \lim_{\delta \to 0} P_{\Lambda}(\psi, \delta) = P_{\Lambda}(\psi).
%	\]
%\end{proof}

With the previous lemma we can obtain the following examples.

\begin{example}
	Let $\varphi: M \to \mathbb{R}$ be a hyperbolic potential and $\phi:M \to \mathbb{R}$ such that
	\[
	\max \phi - \min \phi < P_{\Lambda}(\varphi) - P_{\Lambda^{c}}(\varphi). 
	\]
	It implies that
	\[
	P_{\Lambda^{c}}(\varphi + \phi)  \leq  P_{\Lambda^{c}}(\varphi + \max \phi) = P_{\Lambda^{c}}(\varphi) + \max \phi <
	\]
	\[
	< P_{\Lambda}(\varphi) + \min \phi = P_{\Lambda}(\varphi + \min \phi) \leq P_{\Lambda}(\varphi + \phi). 
	\]
	So, $\varphi + \phi$ is a hyperbolic potential.
	
	If $|t|\leq 1$, we also have 
	\[
	\max t\phi - \min t\phi < P_{\Lambda}(\varphi) - P_{\Lambda^{c}}(\varphi). 
	\]
	and $\varphi + t\phi$ is also a hyperbolic potential.
	
	In particular, since the null potential is hyperbolic, if we have
	\[
	\max \phi - \min \phi < P_{\Lambda}(0)=P(0)=h(f),
	\]
	then $\phi$ is also a hyperbolic potential.
\end{example}

\begin{example}
	Now, for Viana maps, we construct a potential with uniformly bounded Birkhoff sums. 
	
	Let $B$ be an open set and $V=f^{-1}(B)$ such that $V \cap B = \emptyset$ and $V \cap \mathcal{C} = \emptyset$, where $\mathcal{C}$ is the critical set. Let $\phi : \overline{B} \to \mathbb{R}$ be a $C^{\infty}$ function such that $\phi_{\mid \partial B} \equiv 0$ and we define a potential $\varphi : X \to \mathbb{R}$ as
	\[
	\varphi(x) =
	\left\{
	\begin{array}{cc}
		\phi(x), & \text{if} \,\, x  \in \displaystyle B \\
		-\phi(f(x)), &  \text{if} \,\, x \in V\\
		0, &  \text{if} \,\, x \in (V \cup B)^{c}\\    
	\end{array}
	\right.  
	\]
	\begin{claim}
		The Birkhoff sums $S_{n}\varphi$ are uniformly bounded.
	\end{claim}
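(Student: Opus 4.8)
The plan is to recognize $\varphi$ as a continuous coboundary $u-u\circ f$ and then telescope the Birkhoff sums. Define $u : X \to \mathbb{R}$ by $u(x)=\phi(x)$ if $x\in\overline{B}$ and $u(x)=0$ if $x\notin B$. Since $B$ is open, $\overline{B}$ and $X\setminus B$ are closed sets covering $X$, and on their common part $\partial B$ the two formulas agree because $\phi_{\mid\partial B}\equiv 0$; hence $u$ is well defined and continuous by the gluing lemma. As $X=S^{1}\times I$ is compact, $u$ is bounded, say $|u(x)|\le C$ for all $x$.

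First I would verify the pointwise identity $\varphi=u-u\circ f$ on all of $X$, using the three branches in the definition of $\varphi$ together with $V=f^{-1}(B)$ and $V\cap B=\emptyset$ (the latter says exactly $B\cap f^{-1}(B)=\emptyset$). If $x\in B$, then $x\notin f^{-1}(B)$, so $f(x)\notin B$ and $u(f(x))=0$, whence $(u-u\circ f)(x)=\phi(x)=\varphi(x)$. If $x\in V$, then $x\notin B$, so $u(x)=0$, while $f(x)\in B$ gives $u(f(x))=\phi(f(x))$, whence $(u-u\circ f)(x)=-\phi(f(x))=\varphi(x)$. If $x\in(V\cup B)^{c}$, then $u(x)=0$ and $f(x)\notin B$ so $u(f(x))=0$, whence $(u-u\circ f)(x)=0=\varphi(x)$. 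In each case the possible membership of $x$ or $f(x)$ in $\partial B$ is harmless, since $u$ vanishes there; note also that this argument re-proves that $\varphi$ is continuous.

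Then, telescoping,
$$
S_{n}\varphi(x)=\sum_{i=0}^{n-1}\big(u(f^{i}(x))-u(f^{i+1}(x))\big)=u(x)-u(f^{n}(x)),
$$
so $|S_{n}\varphi(x)|\le 2C$ for every $n\in\mathbb{N}$ and every $x\in X$, which proves the claim. Combined with Lemma~\ref{BIR}, this shows that $\varphi$ is a hyperbolic potential; and since $\phi$ is $C^{\infty}$ and $V\cap\mathcal{C}=\emptyset$, the potential $\varphi$ is regular enough (Hölder, in fact smooth away from the critical set of $f$) for the existence-and-uniqueness theory to apply.

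There is no genuine obstacle here: the entire content is the coboundary identity $\varphi = u - u\circ f$, and the only delicate point is the consistency of the three branches defining $\varphi$ — equivalently, the continuity of $u$ — which is precisely what the normalization $\phi_{\mid\partial B}\equiv 0$ and the disjointness $V\cap B=\emptyset$ are arranged to secure. The hypotheses $V\cap\mathcal{C}=\emptyset$ and $\phi\in C^{\infty}$ are not needed for the uniform bound itself; they serve only the subsequent equilibrium-state conclusion.
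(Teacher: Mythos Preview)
Your proof is correct. The identification $\varphi = u - u\circ f$ with $u=\phi\cdot\mathbf{1}_{\overline{B}}$ is valid exactly because $V\cap B=\emptyset$ forces $f(B)\cap B=\emptyset$ and $\phi_{\mid\partial B}\equiv 0$ makes $u$ continuous; the telescoping then gives the uniform bound $|S_{n}\varphi|\le 2\sup|\phi|$ in one line.

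The paper does not name the coboundary structure. Instead it argues case by case on whether $x\in V$, $x\in B$, or $x\in(V\cup B)^{c}$, tracking how successive visits to $V$ and then $B$ contribute $-\phi(f^{j}(x))+\phi(f^{j}(x))=0$, so that only a possible initial term in $B$ and a possible final term in $V$ survive, yielding a bound of order $\sup\phi-\inf\phi$. This is the same telescoping written out by hand. Your formulation is cleaner and more conceptual: it explains \emph{why} the cancellation happens (coboundaries always have bounded Birkhoff sums) and immediately gives, for free, that $\int\varphi\,d\eta=0$ for every invariant $\eta$, which is exactly what Lemma~\ref{BIR} needs. The paper's approach has the minor advantage of being self-contained without introducing an auxiliary function, but your argument is both shorter and more informative.
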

%	\begin{proof}
%		For $x \in V$, we have that
%		\begin{eqnarray*}
%			S_{n}\varphi(x)  & = & \varphi(x) + \varphi(f(x)) + \dots + \varphi(f^{n-1}(x)) \\ 
%			& = & -\phi(f(x)) + \phi(f(x)) + 0 + \dots + \phi(f^{n-1}(x)) \leq \sup\phi,
%		\end{eqnarray*}
%		For $x \in B$, we have
%		\[
%		S_{n}\varphi(x) = \varphi(x) + \varphi(f(x)) + \dots + \varphi(f^{n-1}(x)) \leq \sup\phi - \inf\phi,
%		\]
%		if $f^{n-1}(x) \in V$ and it is equal to $\phi(x)$, otherwise.
%		
%		For $x \in (V \cup B)^{c}$, we have at most the same estimate for $S_{n}\varphi(x)$ because the orbit of $x$ may intersect $V \cup B$.
%	\end{proof}
	
	So, the Birkhoff sums are uniformly bounded and Lemma \ref{BIR} guarantees that $\varphi$ is hyperbolic. Moreover, $\varphi$ is H\"older, which means that we have existence and finiteness of equilibrium state.
\end{example}

\begin{remark}
	We observe that this section is also developed for nonexponential contractions, that is, for general zooming systems with the mild condition $\alpha_{n}(r)\leq ar$ for some $a \in (0,1)$. In the case of exponential contractions, we emphasize the relation with the work in \cite{AOS}. The novelty here is the generality of contractions beyond the exponential context.
\end{remark}

\section{Potentials with Uniqueness}\label{PrF}		

Here we prove denseness in Theorem \ref{F}. We divide the proof into some lemmas.

\begin{lemma}\label{dense}
	Let $M$ be a compact space and $f: M \to M$ a continuous open zooming system. There exists a countable set of H\"older potentials which is dense in the set of continuous zooming potentials which have finiteness of equilibrium states.
\end{lemma} 
\begin{proof}
	Once $M$ is compact, by \cite{OV}[Theorem A.3.13], there exists a dense countable set $\mathcal{S} \subset \mathcal{C}^{0}(M)$, the space of continuous potentials. The space of continuous H\"older potentials $\mathcal{H}$ is dense. Given $\phi_{n} \in \mathcal{S}$ and $m \in \mathbb{N}$, there exists $\phi_{n}^{m} \in \mathcal{H}$ such that $\parallel \phi_{n} - \phi_{n}^{m} \parallel < 1/m$, which shows that the countable set $\mathcal{S}_{0} = \{\phi_{n}^{m}\}$ is dense in $\mathcal{H}$.
	
	We remind that the set $\mathcal{ZC}$ of continuous zooming potentials is open. Since the space of H\"older potentials $\mathcal{H}$ is residual, the intersection $\mathcal{ZH} = \mathcal{ZC} \cap \mathcal{S}_{0}$ is dense in $\mathcal{ZC}$. 
	
	%Every potential in $\mathcal{ZH}$ has finitely many ergodic equilibrium states. So, the set of ergodic equilibrium states of potentials in $\mathcal{ZH}$ is countable. If necessary, we can add to $\mathcal{S}_{0}$ the sequences $\varphi_{p,m,n}(x) = \phi_{n}^{m}(x) - (\psi(x) - \int \psi d\mu)$, as in Lemma \ref{sequence}, where $\psi$ can be taken H\"older. 
	
	%By stability, this set of equilibrium states is closed with countable boundary, once the set of accumulation points of equilibrium states is also of equilibrium states. It means that this boundary is finite, since the boundary is a countable perfect set.
	
	%But also by stability, the boundary coincide with the equilibrium states of a residual of H\"older potentials. Again by stability, all the continuous potentials has their ergodic equilibrium states among a finite list of ergodic zooming measures. 
\end{proof}

\begin{lemma}\label{sequence}
	Let $M$ be a compact metric space and $f: M \to M$ a continuous open zooming system. Given $\phi : M \to \mathbb{R}$ a continuous zooming potential with finiteness of equilibrium states. For each equilibrium state $\mu$ of $\phi$ there exist a sequence of continuous potentials $\phi_{n} \to \phi$ and $n_{0} \in \mathbb{N}$ such that $\mu$ is the unique measure which is an equilibrium state of $\phi_{n}$ for every $n \geq n_{0}$.
\end{lemma} 
\begin{proof}
	Let $\phi : M \to 
	\mathbb{R}$ be a potential with finiteness of equilibrium states. Let $\mu_{1}, \dots, \mu_{k}$ be its ergodic equilibrium states.  We fix some $i \leq k$ and take a continuous potential $\psi_{i} : M \to \mathbb{R}$  such that $\int\psi_{i} d\mu_{i} < \int \psi_{i} d\mu_{j}, j \neq i$. Define for $a_{n} \downarrow 0$
	\[
	\varphi_{n}^{i}(x) = \phi(x) - a_{n}\bigg{(}\psi_{i}(x) - \int \psi_{i} d\mu_{i}\bigg{)}.
	\]
	Given $\eta \neq \mu_{j}$ for every $j \leq k$ it holds that $\eta$ is not an equilibrium state of $\varphi_{n}^{i}$ for infinitely many $n$.  In fact, once $\eta \neq \mu_{j}$ if we could find $n_{1}, n_{2}, \dots$ such that $\eta$ is an equilibrium state of $\varphi_{n_{1}}^{i}, \varphi_{n_{2}}^{i}, \dots$, by stability we would have $\eta$ as an equilibrium state of $\phi$, which is a contradiction. Then, $\eta$ can only be an equilibrium state of $\varphi_{n}^{i}$ for infinitely many $n$ if $\eta = \mu_{j}$ for some $j \leq k$. If $\eta = \mu_{j}$ for some $j \neq i$, we obtain
	\[
	h_{\eta}(f) + \int \varphi_{n}^{i} d\eta = h_{\eta}(f) + \int \phi d\eta - a_{n}\int \bigg{(}\psi_{i}(x) - \int \psi_{i} d\mu_{i}\bigg{)} d \eta < 
	\]
	\[
	h_{\eta}(f) + \int \phi d\eta = h_{\mu_{i}}(f) + \int \phi d\mu_{i} = h_{\mu_{i}}(f) + \int \varphi_{n}^{i} d\mu_{i}.
	\]
	Then, we can find $n_{0} \in \mathbb{N}$ such that $\mu_{i}$ is the unique measure which is an equilibrium state of $\varphi_{n}^{i}$ for every $n \geq n_{0}$.  
\end{proof}
\begin{lemma}\label{accumulation}
	Let $M$ be a compact metric space and $f: M \to M$ a continuous open zooming system. Given $\phi : M \to \mathbb{R}$ a continuous zooming potential with finiteness of equilibrium states. For each equilibrium state $\mu$ of $\phi$ there exist a sequence of continuous potentials $\phi_{n} \to \phi$ such that $\mu$ is the unique accumulation point of equilibrium states of the sequence $\phi_{n}$.
\end{lemma} 
\begin{proof}
	With the notation of the proof of Lemma \ref{sequence}, let $\eta \neq \mu_{i}$. We claim that there exists a neighbourhood $\mathcal{U}_{\eta}$ of $\eta$ such that for every $\nu \in \mathcal{U}_{\eta}$ we do not have $\nu$ as an equilibrium state of $\varphi_{n}^{i}$ for every $n \geq n_{0}$. In fact, for every $n \geq n_{0}$ we have
	\[
	h_{\eta}(f) + \int \varphi_{n}^{i} d\eta  < h_{\mu_{i}}(f) + \int \varphi_{n}^{i} d\mu_{i},
	\]
	because $\eta$ is not an equilibrium state of $\varphi_{n}^{i}$. Taking any sequence $\eta_{p} \to \eta$, by \cite{BiS}[Lemma 3.0.1] we can find a generating partition $\mathcal{P}$ such that
	\[
	h_{\eta_{p}}(f) \leq \limsup_{p \to \infty}h_{\eta_{p}}(f) \leq h_{\eta}(f,\mathcal{P}) \leq h_{\eta}(f).
	\]
	Hence, we obtain for every $n \geq n_{0}$
	\[
	h_{\eta_{p}}(f) + \int \varphi_{n}^{i} d\eta_{p} \leq \limsup_{p \to \infty}\bigg{(}h_{\eta_{p}}(f) + \int \varphi_{n}^{i} d\eta_{p} \bigg{)} \leq h_{\eta}(f) + \int \varphi_{n}^{i} d\eta  < h_{\mu_{i}}(f) + \int \varphi_{n}^{i} d\mu_{i}.
	\]
	So, $\eta_{p}$ is not an equilbrium state for $\varphi_{n}^{i}$ and the neighborhood can be found because the sequence of measures $\eta_{p} \to \eta$ is arbitrary.
	
	Taking $\eta = \mu_{j}$ for some $j \neq i$, there exist $n_{j}$ and a neighborhood $\mathcal{U}_{\eta}$ of $\eta$ such that $\nu$ is not an equilibrium state of $\varphi_{n}^{i}$ for every $\nu \in \mathcal{U}_{\eta}$ and every $n \geq n_{j}$. If there exists $m_{1}, m_{2}, \dots$ and equilibrium states $\eta_{1}, \eta_{2}, \dots$ of $\varphi_{m_{1}}^{i}, \varphi_{m_{2}}^{i}, \dots$, respectively, and we suppose $\eta_{p} \to \eta$, then
	\[
	h_{\eta_{p}}(f) + \int \varphi_{m_{p}}^{i} d\eta_{p}  = h_{\mu_{i}}(f) + \int \varphi_{m_{p}}^{i} d\mu_{i} = h_{\mu_{i}}(f) + \int \phi d\mu_{i} = h_{\eta}(f) + \int \phi d\eta > h_{\eta}(f) + \int \varphi_{m_{p}}^{i} d\eta.
	\]
	So, we have $\eta_{p} \not \in \mathcal{U}_{\eta}$ with $p$ large enough. It is a contradiction. It means that, in this case, we must have $\eta = \mu_{i}$. Also, $\mu_{i}$ is the unique accumulation point of equilibrium states of the sequence $\varphi_{n}^{i} $.
\end{proof}
\begin{lemma}\label{unique}
	Let $M$ be a compact metric space and $f: M \to M$ a continuous open zooming system. The set of continuous potentials with uniqueness is dense in the set of continuous potentials with finiteness.
\end{lemma} 
\begin{proof}
	With the notation of Lemma \ref{accumulation}, given $\mu_{i}$ an equilibrium state, we have that it is the unique accumulation point of the sequence $\varphi_{n}^{i}$ defined in Lemma \ref{sequence}. If we have a non constant sequence $\eta_{p} \to \mu_{i}$, where $\eta_{p}$ is an equilibrium state of $\varphi_{n_{p}}^{i} \to \phi$, then for every $p \in \mathbb{N}$ it holds that
	\[
	h_{\eta_{p}}(f) + \int \varphi_{n_{p}}^{i} d\eta_{p} = h_{\mu_{i}}(f) + \int \varphi_{n_{p}}^{i} d\mu_{i} = h_{\mu_{i}}(f) + \int \phi d\mu_{i}
	\]
	Also, there exists $n_{p}' \in \mathbb{N}$ such that for every $n \geq n_{p}'$
	\[
	h_{\eta_{p}}(f) + \int \varphi_{n}^{i} d\eta_{p} < h_{\mu_{i}}(f) + \int \varphi_{n}^{i} d\mu_{i} = h_{\mu_{i}}(f) + \int \phi d\mu_{i}
	\]
	But for every $p \in \mathbb{N}$ and every $n \in \mathbb{N}$
	\[
	h_{\eta_{p}}(f) + \int \varphi_{n}^{i} d\eta_{p} = h_{\eta_{p}}(f) + \int \phi d\eta_{p} - a_{n}\bigg{(}\int \psi_{i}d\eta_{p} - \int \psi_{i} d\mu_{i}\bigg{)}
	\]
	and, in particular, for $n = n_{p}$
	\[
	h_{\eta_{p}}(f) + \int \varphi_{n_{p}}^{i} d\eta_{p} = h_{\eta_{p}}(f) + \int \phi d\eta_{p} - a_{n_{p}}\bigg{(}\int \psi_{i}d\eta_{p} - \int \psi_{i} d\mu_{i}\bigg{)} = h_{\mu_{i}}(f) + \int \phi d\mu_{i}
	\]
	Moreover, once $\mu_{i}$ is an equilibrium state of $\phi$, we have
	\[
	h_{\eta_{p}}(f) + \int \phi d\eta_{p} \leq h_{\mu_{i}}(f) + \int \phi d\mu_{i} \implies - a_{n_{p}}\bigg{(}\int \psi_{i}d\eta_{p} - \int \psi_{i} d\mu_{i}\bigg{)} \geq 0 \implies 
	\]
	\[
	-a_{n}\bigg{(}\int \psi_{i}d\eta_{p} - \int \psi_{i} d\mu_{i}\bigg{)} \geq 0
	\]
	We claim that $n_{p} = 1$. Otherwise, since the sequence $a_{n}$ is decreasing, we have $a_{n_{p} -1} > a_{n{p}}$ and
	\[
	h_{\mu_{i}}(f) + \int \phi d\mu_{i} \geq h_{\eta_{p}}(f) + \int \varphi_{n_{p}-1}^{i} d\eta_{p} = h_{\eta_{p}}(f) + \int \phi d\eta_{p} - a_{n_{p}-1}\bigg{(}\int \psi_{i}d\eta_{p} - \int \psi_{i} d\mu_{i}\bigg{)} > 
	\]
	\[
	h_{\eta_{p}}(f) + \int \phi d\eta_{p} - a_{n_{p}}\bigg{(}\int \psi_{i}d\eta_{p} - \int \psi_{i} d\mu_{i}\bigg{)} = h_{\eta_{p}}(f) + \int \varphi_{n_{p}}^{i} d\eta_{p} = h_{\mu_{i}}(f) + \int \phi d\mu_{i},
	\]
	which is a contradiction. Hence, $n_{p} =1$ for every $p \in \mathbb{N}$ and for $n > 1$ the unique equilibrium state of $\varphi_{n}^{i}$ is $\mu_{i}$.
\end{proof}
\begin{lemma}\label{countable}
	Let $M$ be a compact metric space and $f: M \to M$ a continuous open zooming system. There exists a countable set of continuous potentials with uniqueness which is dense in the set of continuous potentials with finiteness.
\end{lemma} 
\begin{proof}
	By Lemma \ref{dense}, there exists a countable set $\mathcal{ZH} = \{\phi_{m}\}$ of H\"older zooming potentials which is dense in the set of continuous potentials with finiteness. For each $m \in \mathbb{N}$ we can define the following sequence as in Lemma \ref{sequence}:
	\[
	\varphi_{m,n}^{i_{m}}(x) = \phi_{m}(x) - a_{n}\bigg{(}\psi_{m,i_{m}}(x) - \int \psi_{m,i_{m}} d\mu_{i_{m}}\bigg{)}, i_{m} \leq k_{m}.
	\]
	By Lemmas \ref{accumulation} and \ref{unique} this sequence of potentials has uniqueness. Hence, the following set of potentials has uniqueness and is dense $\{\varphi_{m,n}^{i_{m}}\}, m,n \in \mathbb{N}, i_{m} \leq k_{m}$.	
\end{proof}
\begin{lemma}\label{convex}
	Let $M$ be a compact metric space and $f: M \to M$ a continuous open zooming system. For each $\mu$ ergodic probability which is an equilibrium state for some potential, we have that the following set is closed and convex (in particular, connected):
	\[
	\mathcal{E}_{\mu} = \{\phi: M \to \mathbb{R} \mid \mu \,\, \text{is an equilibrium state of} \,\, \phi\}.
	\]
	Also, the following set is convex (and connected):
	\[
	\mathcal{E}_{\mu}' = \{\phi: M \to \mathbb{R} \mid \mu \,\, \text{is the unique equilibrium state of} \,\, \phi\}.
	\]
\end{lemma} 
\begin{proof}
	Let $\phi, \varphi \in \mathcal{E}_{\mu}$ and $t \in (0,1)$. It holds that
	\[
	P((1-t)\phi + t\varphi) = \sup_{\eta}\Bigg{\{} h_{\eta}(f) + \int[(1-t)\phi + t\varphi] d\eta\Bigg{\}} =
	\]
	\[
	\sup_{\eta}\Bigg{\{}(1-t) \Bigg{(}h_{\eta}(f) + \int \phi d\eta\Bigg{)}+ t\Bigg{(}h_{\eta}(f) + \int \varphi d\eta \Bigg{)}\Bigg{\}} \leq
	\]
	\[
	\sup_{\eta}\Bigg{\{}(1-t) \Bigg{(}h_{\eta}(f) + \int \phi d\eta\Bigg{)}\Bigg{\}} + 
	\sup_{\eta}\Bigg{\{} t\Bigg{(}h_{\eta}(f) + \int \varphi d\eta \Bigg{)}\Bigg{\}} =
	\]
	\[
	(1-t)\sup_{\eta}\Bigg{\{}h_{\eta}(f) + \int \phi d\eta\Bigg{\}} + 
	t\sup_{\eta}\Bigg{\{}h_{\eta}(f) + \int \varphi d\eta\Bigg{\}} =
	\]
	\[
	(1-t)P(\phi) + tP(\varphi) = (1-t)\Bigg{(}h_{\mu}(f) + \int \phi d\mu \Bigg{)} + t\Bigg{(}h_{\mu}(f) + \int \varphi d\mu \Bigg{)}=
	\]
	\[
	h_{\mu}(f) + \int [(1-t)\phi + t\varphi] d\mu.
	\]
	It implies that $P((1-t)\phi + t\varphi) = h_{\mu}(f) + \int [(1-t)\phi + t\varphi] d\mu$ and $\mu$ is an equilibrium state of $(1-t)\phi + t\varphi$ for every $t \in (0,1)$. Moreover, if $\varphi$ has uniqueness, given $\nu \neq \mu$, we have that $\nu$ is not an equlibrium state of $\varphi$. Hence,
	\[
	h_{\nu}(f) + \int [(1-t)\phi + t\varphi] d\nu = (1-t)\Bigg{(}h_{\nu}(f) + \int \phi d\nu \Bigg{)} + t\Bigg{(}h_{\nu}(f) + \int \varphi d\nu \Bigg{)} <
	\]
	\[
	(1-t)P(\phi) + tP(\varphi) = h_{\mu}(f) + \int [(1-t)\phi + t\varphi] d\mu = P((1-t)\phi + t\varphi),
	\] 
	It means that $\nu$ cannot be an equilibrium state of $(1-t)\phi + t\varphi$ and $\mu$ is the unique equilibrium state of $(1-t)\phi + t\varphi$ for every $t \in (0,1)$. It shows that the sets $\mathcal{E}_{\mu}, \mathcal{E}_{\mu}'$ are convex (and connected). By stability, it is easy to see that the set $\mathcal{E}_{\mu}$ is closed. The lemma is proved.
\end{proof} 
\begin{lemma}\label{open}
	Let $M$ be a compact metric space and $f: M \to M$ a continuous open zooming system. The set of continuous potentials with uniqueness contains an open and dense set in the set of continuous potentials with finiteness.
\end{lemma} 
\begin{proof}
	Let $\phi: M \to \mathbb{R}$ a potential with at least two equilibrium states $\mu_{1}, \dots, \mu_{k}$. We can construct a sequence as in Lemma \ref{sequence} with unique equilibrium state $\mu_{i}$:
	\[
	\varphi_{n}^{i}(x) = \phi(x) - a_{n}\bigg{(}\psi_{i}(x) - \int \psi_{i} d\mu_{i}\bigg{)},
	\] 
	with $\psi_{i}$ chosen such that $\int\psi_{i} d\mu_{i} < \int \psi_{i} d\mu_{j}, j \neq i$ and $a_{n} \downarrow 0$. We can take a neighborhood $\mathcal{U}_{i}$ of $\psi_{i}$ such that for every $\psi \in \mathcal{U}_{i}$ we have $\int\psi d\mu_{i} < \int \psi d\mu_{j}, j \neq i$. Then, we can obtain the following neighborhood of $\varphi_{n}^{i}(x)$
	\[
	\bigg{\{}\phi(\cdot) - a_{n}\bigg{(}\psi(\cdot) - \int \psi d\mu_{i}\bigg{)} \bigg{|} \psi \in \mathcal{U}_{i}{\bigg\}} \subset \mathcal{E}_{\mu_{i}}',
	\]
	where the set $\mathcal{E}_{\mu_{i}}'$ is defined in Lemma \ref{convex}.
	
	If the countable subset $\mathcal{CF} \subset \mathcal{ZH}$ of zooming H\"older potentials with at least two ergodic equilibrium states is dense in the set of continuous potentials with finiteness, we can take the open and dense set
	\[
	\mathcal{D}:=\bigcup_{m=1}^{\infty}\bigcup_{i_{m}=1}^{k_{m}}\bigg{\{}\phi_{m}(\cdot) - a_{n}\bigg{(}\psi(\cdot) - \int \psi d\mu_{i_{m}}\bigg{)} \bigg{|} \psi \in \mathcal{U}_{i_{m}}{\bigg\}},
	\]
	where $\phi_{m} \in \mathcal{CF}$ given in Lemma \ref{countable}. Otherwise, there exists a neighbourhood $\mathcal{U} \subset \mathcal{ZC} \backslash \mathcal{CF}$, the interior of the complement of $\mathcal{CF}$,  composed of potentials with uniqueness. We then take $\mathcal{D} \cup \mathcal{U}$. 
\end{proof}

\section{Examples}\label{Examples}

In this section, we give examples of  zooming systems. We begin by defining a non-flat map. We begin by recalling the examples given in \cite{AOS}, where the expanding set is dense in $M$, the hole is empty and the map is closed.

\subsection{Viana maps} We recall the definition of the open class of maps with critical sets in dimension 2, introduced by M. Viana in \cite{V}. We skip the technical
points. It can be generalized for any dimension (See \cite{A}).

Let $a_{0} \in (1,2)$ be such that the critical point $x=0$ is pre-periodic for the quadratic map $Q(x)=a_{0} - x^{2}$. Let $S^{1}=\mathbb{R}/\mathbb{Z}$ and 
$b:S^{1} \to \mathbb{R}$ a Morse function, for instance $b(\theta) = \sin(2\pi\theta)$. For fixed small $\alpha > 0$, consider the map
\[
\begin{array}{c}
	f_{0}: S^{1} \times \mathbb{R} \longrightarrow S^{1} \times \mathbb{R}\\
	\,\,\,\,\,\,\,\,\,\,\,\,\,\,\,\,\,\,\,\ (\theta,x) \longmapsto (g(\theta),q(\theta,x))
\end{array}
\] 
where $g$ is the uniformly expanding map of the circle defined by $g(\theta)=d\theta
(mod\mathbb{Z})$ for some $d \geq 16$, and $q(\theta,x) = a(\theta) - x^{2}$ with $a(\theta) = a_{0} + \alpha b(\theta)$. It is easy to check that for $\alpha > 0$ 
small enough there is an interval $I \subset (-2,2)$ for which $f_{0}(S^{1} \times I)$ is contained in the interior of $S^{1} \times I$. Thus, any map $f$ sufficiently
close to $f_{0}$ in the $C^{0}$ topology has $S^{1} \times I$ as a forward invariant region. We consider from here on these maps $f$ close to $f_{0}$ restricted to 
$S^{1} \times I$. Taking into account the expression of $f_{0}$ it is not difficult to check that for $f_{0}$ (and any map $f$ close to $f_{0}$ in the $C^{2}$ topology)
the critical set is non-degenerate.

The main properties of $f$ in a $C^{3}$ neighbourhood of $f$ that we will use here are summarized below (See \cite{A},\cite{AV},\cite{Pi1}):

\begin{enumerate}
	\item[(1)] $f$ is \textbf{\textit{non-uniformly expanding}}, that is, there exist $\lambda > 0$ and a Lebesgue full measure set $H \subset S^{1} \times I$ such that 
	for every point $p=(\theta, x) \in H$, the following holds
	\[
	\displaystyle \limsup_{n \to \infty} \frac{1}{n} \sum_{i=0}^{n-1} \log \parallel Df(f^{i}(p))^{-1}\parallel^{-1} < -\lambda.
	\]  
	\item[(2)] Its orbits have \textbf{\textit{slow approximation to the critical set}}, that is, for every $\epsilon > 0$ the exists $\delta > 0$ such that for every point
	$p=(\theta, x) \in H \subset S^{1} \times I$, the following holds 
	\[
	\displaystyle \limsup_{n \to \infty} \frac{1}{n} \sum_{i=0}^{n-1} - \log \text{dist}_{\delta}(p,\mathcal{C}) < \epsilon.
	\]  
	where 
	\[
	\text{dist}_{\delta}(p,\mathcal{C}) =  
	\left\{ 
	\begin{array}{ccc}
		dist(p,\mathcal{C}), & if & dist(p,\mathcal{C}) < \delta\\
		1 & if & dist(p,\mathcal{C}) \geq \delta 
	\end{array}
	\right.
	\]  
	\item[(3)] $f$ is topologically mixing;
	
	\item[(4)] $f$ is strongly topologically transitive;
	
	\item[(5)] it has a unique ergodic absolutely continuous invariant (thus SRB) measure;
	
	\item[(6)]the density of the SRB measure varies continuously in the $L^{1}$ norm with $f$.
\end{enumerate}

\begin{remark}
	We observe that this definition of non-uniformly expansion is included in ours by neighbourhoods.
\end{remark}

\subsection{Benedicks-Carleson Maps} We study a class of non-hyperbolic maps of the interval with the condition of exponential growth of the derivative at critical values, called 
\textbf{\textit{Collet-Eckmann Condition}}. We also ask the map to be $C^{2}$ and topologically mixing and the critical points to have critical order 
$2 \leq \alpha < \infty$.

Given a critical point $c \in I$, the \textbf{\textit{critical order}} of $c$ is a number $\alpha_{c} > 0$ such that 
$f(x) = f(c) \pm |g_{c}(x)| ^{\alpha_{c}}, \,\, \text{for all} \, \, x \in \mathcal{U}_{c}$ where $g_{c}$ is a diffeomorphism 
$g_{c}: \mathcal{U}_{c} \to g(\mathcal{U}_{c})$ and $\mathcal{U}_{c}$ is a neighbourhood of $c$. 

Let $\delta>0$ and denote $\mathcal{C}$ the set of critical points and $\displaystyle B_{\delta} = \cup_{c \in \mathcal{C}} (c - \delta, c + \delta)$. 
Given $x \in I$, we suppose that

\begin{itemize}
	\item \textbf{(Expansion outside $B_{\delta}$)}.  There exists $\kappa > 1 $ and $\beta > 0$ such that, if $x_{k} = f^{k}(x) \not \in B_{\delta}, \,\, 0 \leq k \leq n-1$ then $|Df^{n}(x)| \geq \kappa \delta^{(\alpha_{\max} -1)}e^{\beta n}$, where $\alpha_{\max} = \max \{\alpha_{c}, c \in \mathcal{C}\}$. Moreover, if $x_{0} \in f(B_{\delta})$ or $x_{n} \in B_{\delta}$ then $|Df^{n}(x)| \geq \kappa e^{\beta n}$.
	
	\item \textbf{(Collet-Eckmann Condition)}. There exists $\lambda > 0$ such that 
	\[
	|Df^{n}(f(c))| \geq e^{\lambda n}.
	\]
	
	\item \textbf{(Slow Recurrence to $\mathcal{C}$)}. There exists $\sigma \in (0, \lambda/5)$ such that 
	\[
	dist(f^{k}(x), \mathcal{C}) \geq e^{-\sigma k}.
	\]
\end{itemize}

\subsection{Rovella Maps}

There is a class of non-uniformly expanding maps known as \textbf{\textit{Rovella Maps}}. They are derived from the so-called \textit{Rovella Attractor},
a variation of the \textit{Lorenz Attractor}. We proceed with a brief presentation. See \cite{AS} for details.

\subsubsection{Contracting Lorenz Attractor}

The geometric Lorenz attractor is the first example of a robust attractor for a flow containing a hyperbolic singularity. The attractor is a transitive maximal invariant
set for a flow in three-dimensional space induced by a vector field having a singularity at the origin for which the derivative of the vector field at the singularity has
real eigenvalues $\lambda_{2} < \lambda_{3} < 0 < \lambda_{1}$ with $\lambda_{1} + \lambda_{3} > 0$. The singularity is accumulated by regular orbits which prevent the 
attractor from being hyperbolic.

The geometric construction of the contracting Lorenz attractor (Rovella attractor) is the same as the geometric Lorenz attractor. The only difference is the condition
(A1)(i) below that gives in particular $\lambda_{1} + \lambda_{3} < 0$. The initial smooth vector field $X_{0}$ in $\mathbb{R}^{3}$ has the following properties:

\begin{itemize}
	
	\item[(A1)] $X_{0}$ has a singularity at $0$ for which the eigenvalues $\lambda_{1},\lambda_{2},\lambda_{3} \in \mathbb{R}$ of $DX_{0}(0)$ satisfy:
	\begin{itemize}
		
		\item[(i)] $0 < \lambda_{1} < -\lambda_{3}  < -\lambda_{2}$,
		
		\item[(ii)] $r > s+3$, where $r=-\lambda_{2}/\lambda_{1}, s=-\lambda_{3}/\lambda_{1}$;
	\end{itemize}
	
	\item[(A2)] there is an open set $U \subset \mathbb{R}^{3}$, which is forward invariant under the flow, containing the cube
	$\{(x,y,z) : \mid x \mid \leq 1, \mid y \mid \leq 1, \mid x \mid \leq 1\}$ and supporting the \textit{Rovella attractor}
	\[
	\displaystyle \Lambda_{0} = \bigcap_{t \geq 0} X_{0}^{t}(U).
	\]
	
	The top of the cube is a Poincar\'e section foliated by stable lines $\{x = \text{const}\} \cap \Sigma$ which are invariant under Poincar\'e first return map $P_{0}$.
	The invariance of this foliation uniquely defines a one-dimensional map $f_{0} : I \backslash \{0\} \to I$ for which
	\[
	f_{0} \circ \pi = \pi \circ P_{0},
	\]
	where $I$ is the interval $[-1,1]$ and $\pi$ is the canonical projection $(x,y,z) \mapsto x$;
	
	\item[(A3)] there is a small number $\rho >0$ such that the contraction along the invariant foliation of lines $x =$const in $U$ is stronger than $\rho$.
\end{itemize}

See \cite{AS} for properties of the map $f_{0}$.

\subsubsection{Rovella Parameters}

The Rovella attractor is not robust. However, the chaotic attractor persists in a measure theoretical sense: there exists a one-parameter family of positive Lebesgue measure
of $C^{3}$ close vector fields to $X_{0}$ which have a transitive non-hyperbolic attractor. In the proof of that result, Rovella showed that there is a set of parameters
$E \subset (0,a_{0})$ (that we call \textit{Rovella parameters}) with $a_{0}$ close to $0$ and $0$ a full density point of $E$, i.e.
\[
\displaystyle \lim_{a \to 0} \frac{\mid E \cap (0,a) \mid}{a} = 1,
\]
such that:

\begin{itemize}
	\item[(C1)] there is $K_{1}, K_{2} > 0$ such that for all $a \in E$ and $x \in I$
	\[
	K_{2} \mid x \mid^{s-1} \leq f_{a}'(x) \leq K_{1} \mid x \mid^{s-1},
	\]
	where $s=s(a)$. To simplify, we shall assume $s$ fixed.
	
	\item[(C2)] there is $\lambda_{c} > 1$ such that for all $a \in E$, the points $1$ and $-1$ have \textit{Lyapunov exponents} greater than $\lambda_{c}$:
	\[
	(f_{a}^{n})'(\pm 1) > \lambda_{c}^{n}, \,\, \text{for all} \, \, n \geq 0;
	\]
	
	\item[(C3)] there is $\alpha > 0$ such that for all $a \in E$ the \textit{basic assumption} holds:
	\[
	\mid f_{a}^{n-1}(\pm 1)\mid > e^{-\alpha n}, \,\, \text{for all} \, \, n \geq 1;
	\]
	
	\item[(C4)] the forward orbits of the points $\pm 1$ under $f_{a}$ are dense in $[-1,1]$ for all $a \in E$.
\end{itemize}

\begin{definition}
	We say that a map $f_{a}$ with $a \in E$ is a \textbf{\textit{Rovella Map}}. 
\end{definition}

\begin{theorem}
	(Alves-Soufi \cite{AS}) Every Rovella map is non-uniformly expanding. 
\end{theorem}

\subsection{Hyperbolic Times}

The idea of hyperbolic times is a key notion on the study of non-uniformly hyperbolic dynamics and it was introduced by Alves et al. 
This is powerful to get expansion in the context of non-uniform expansion. Here, we recall the basic definitions and results on hyperbolic times that we will use later on. 
We will see that this notion is an example of a Zooming Time. 

In the following, we give definitions taken from \cite{A} and \cite{Pi1}.

\begin{definition}
	Let $M$ be a compact Riemannian manifold of dimension $d \geq 1$ and $f:M \to M$ a continuous map defined on $M$.
	The map $f$ is called \textbf{\textit{non-flat}} if it is a local $C^{1 + \alpha}, (\alpha >0)$ diffeomorphism in the whole manifold except in a 
	non-degenerate set $\mathcal{C} \subset M$. We say that $M \neq \mathcal{C} \subset M$ is a \textbf{\textit{non-degenerate set}}
	if there exist $\beta, B > 0$ such that the following two conditions hold.
	
	\begin{itemize}
		\item $\frac{1}{B} d(x,\mathcal{C})^{\beta} \leq \frac{\parallel Df(x) v\parallel}{\parallel v \parallel} \leq B d(x,\mathcal{C})^{-\beta}$ for all $v \in T_{x}M$, for every $x \in M\backslash\mathcal{C}$.
		
		For every $x, y \in M \backslash \mathcal{C}$ with $d(x,y) < d(x,\mathcal{C})/2$ we have
		
		\item $\mid \log \parallel Df(x)^{-1} \parallel - \log \parallel Df(y)^{-1} \parallel \mid \leq \frac{B}{d(x,\mathcal{C})^{\beta}} d(x,y)$.
	\end{itemize}
	
\end{definition}

In the following, we give the definition of a hyperbolic time \cite{ALP2}, \cite{Pi1}.

\begin{definition}
	(Hyperbolic times). Let us fix $0 < b = \frac{1}{3} \min\{1,1 \slash \beta\} < \frac{1}{2} \min\{1,1\slash \beta\}$. 
	Given $0 < \sigma < 1$ and $\epsilon > 0$, we will say that $n$ is a $(\sigma, \epsilon)$\textbf{\textit{-hyperbolic time}} for a point $x \in M$ 
	(with respect to the non-flat map $f$ with a $\beta$-non-degenerate critical/singular set $\mathcal{C})$ if for all $1 \leq k \leq n$ we have 
	\[
	\prod_{j=n-k}^{n-1} \|(Df \circ f^{j}(x))^{-1}\| \leq \sigma^{k} \,\, \text{and} \,\, dist_{\epsilon}(f^{n-k}(x), \mathcal{C}) \geq \sigma^{bk}.
	\]
	where
	\[
	\text{dist}_{\epsilon}(p,\mathcal{C}) =  
	\left\{ 
	\begin{array}{ccc}
		dist(p,\mathcal{C}), & if & dist(p,\mathcal{C}) < \epsilon\\
		1 & if & dist(p,\mathcal{C}) \geq \epsilon. 
	\end{array}
	\right.
	\]
	We denote de set of points of $M$ such that $n \in \mathbb{N}$ is a $(\sigma,\epsilon)$-hyperbolic time by $H_{n}(\sigma,\epsilon,f)$.
\end{definition}

\begin{proposition}
	(Positive frequence). Given $\lambda > 0$ there exist $\theta > 0$ and $\epsilon_{0} > 0$ such that, for every $x \in M$ and $\epsilon \in (0,\epsilon_{0}]$,
	\[
	\#\{1 \leq j \leq n \mid \,\, x \in H_{j}(e^{-\lambda \slash 4}, \epsilon, f) \} \geq \theta n,
	\]
	whenever $\frac{1}{n}\sum_{i=0}^{n-1}\log\|(Df(f^{i}(x)))^{-1}\|^{-1} \geq \lambda$ and $\frac{1}{n}\sum_{i=0}^{n-1}-\log dist_{\epsilon}(x, \mathcal{C}) \leq \frac{\lambda}{16 \beta}$.
\end{proposition}

Denote by $\mathcal{H}$ the set of point $x \in M$ such that
\[
\displaystyle \limsup_{n \to \infty} \frac{1}{n} \sum_{i=0}^{n-1} \log \parallel Df(f^{i}(p))^{-1}\parallel^{-1} < -\lambda.
\] 
and
\[
\displaystyle \limsup_{n \to \infty} \frac{1}{n} \sum_{i=0}^{n-1} - \log \text{dist}_{\delta}(p,\mathcal{C}) < \epsilon.
\]
If $f$ is non-uniformly expanding, it follows from the proposition that the points of $\mathcal{H}$ have infinitely many moments with positive frequency of hyperbolic times. In particular, they have infinitely many hyperbolic times.

The following proposition shows that the hyperbolic times are indeed zooming times, where the zooming contraction is $\alpha_{k}(r) = \sigma^{k/2}r$.

\begin{proposition}
	Given $\sigma \in (0,1)$ and $\epsilon > 0$, there is $\delta,\rho > 0$, depending only on $\sigma$ and $\epsilon$ and on the map $f$, such that if $x \in H_{n}(\sigma,\epsilon,f)$ then there exists a neighbourhood $V_{n}(x)$ of $x$ with the following properties:
	
	\begin{enumerate}
		\item[(1)] $f^{n}$ maps $\overline{V_{n}(x)}$ diffeomorphically onto the ball $\overline{B_{\delta}(f^{n}(x))}$;
		\item[(2)] $dist(f^{n-j}(y),f^{n-j}(z)) \leq \sigma^{j\slash 2} dist(f^{n}(y), f^{n}(z)), \text{for all} \, \, y,z \in V_ {n}(x)$ and $1 \leq j < n$.
		\item[(3)]$\log \frac{\mid \det Df^{n}(y)\mid}{\mid \det Df^{n}(z)\mid} \leq \rho d(f^{n}(y),f^{n}(z))$.
	\end{enumerate}
	
	for all $y,z \in V_{n}(x)$.
\end{proposition}

The sets $V_{n}(x)$ are called hyperbolic pre-balls and their images $f^{n}(V_{n}(x)) = B_{\delta}(f^{n}(x))$, hyperbolic balls.

\bigskip

In the following, we give definitions for a map on a metric space to have similar behaviour to maps with hyperbolic times and which can be found in \cite{Pi1}.  

Given $M$ a metric spaces and $f: M \to M$, we define for $p \in M$:
\[
\displaystyle \mathbb{D}^{-}(p) = \liminf_{x \to p} \frac{d(f(x),f(p)}{d(x,p)}
\]
Define also,
\[
\displaystyle \mathbb{D}^{+}(p) = \limsup_{x \to p} \frac{d(f(x),f(p)}{d(x,p)}
\]
We will consider points $x \in M$ such that 
\[
\displaystyle \limsup_{n \to \infty} \frac{1}{n} \sum_{i=0}^{n-1} \log \mathbb{D}^{-} \circ f^{i}(x) > 0.
\]  
The critical set $\mathcal{C}$ is the set of points $x \in M$ such that $\mathbb{D}^{-}(x) = 0$ or $\mathbb{D}^{+}(x) = \infty$. For  the non-degenerateness we ask that $\mathcal{C} \neq M$ and there exist $B, \beta >0$ such that

\begin{itemize}
	
	\item $\frac{1}{B} d(x,\mathcal{C})^{\beta} \leq \mathbb{D}^{-}(x) \leq \mathbb{D}^{+}(x) \leq B d(x,\mathcal{C})^{-\beta}, x \not \in \mathcal{C}$.
	
	For every $x, y \in M \backslash \mathcal{C}$ with $d(x,y) < d(x,\mathcal{C})/2$ we have
	
	\item $\mid \log \mathbb{D}^{-}(x) - \log \mathbb{D}^{-}(y) \mid \leq \frac{B}{d(x,\mathcal{C})^{\beta}} d(x,y)$.
	
\end{itemize}

With these conditions we can see that all the consequences for hyperbolic times are valid here and the expanding sets and measures are zooming sets and measures.

\begin{definition}
	We say that a map is \emph{conformal at p} if $\mathbb{D}^{-}(p) = \mathbb{D}^{+}(p)$. So, we define
	\[
	\displaystyle \mathbb{D}(p) = \lim_{x \to p} \frac{d(f(x),f(p)}{d(x,p)}.
	\]
\end{definition}

Now, we give an example of such an open non-uniformly expanding map.  

\subsection{Expanding sets on a metric space} Let $\sigma : \Sigma_{2}^{+} \to \Sigma_{2}^{+}$ be the one-sided shift, with the usual metric:
\[
\displaystyle d(x,y) = \sum_{n=1}^{\infty} \frac{\mid x_{n} - y_{n} \mid}{2^{n}},
\]
where $x = \{x_{n}\}, y = \{y_{n}\}$. We have that $\sigma$ is a conformal map such that $\mathbb{D}^{-}(x) = 2, \text{for all} \, \, \, x \in \Sigma_{2}^{+}$. Also, every forward invariant set (in particular the whole $\Sigma_{2}^{+}$)  and all invariant measures for the shift $\sigma$ are expanding (then they are zooming). In particular, if we consider an invariant set that is not dense such that the reference measure has a Jacobian with bounded distortion, we can obtain an open shift map with $H \neq \emptyset$. To be precise, by taking any (previously fixed) zooming set $\Lambda \subset \Sigma_{2}^{+}$ which is not dense such that the reference measure has a Jacobian with bounded distortion, we apply \cite{S}[Theorem A] to obtain an open zooming system and a Markov structure adapted to a hole $H \subset \Sigma_{2}^{+}$ such that $H \cap \Lambda = \emptyset$. It is enough to take $r_{0}>0$ as in Theorem \cite{S}[Theorem A] such that one of the balls of the open cover is disjoint from $\Lambda$. Hence, we can apply our Theorems \ref{B} and \ref{C} to obtain equilibrium states.

\subsection{Zooming sets on a metric space (not expanding)} Let $\sigma : \Sigma_{2}^{+} \to \Sigma_{2}^{+}$ be the one-sided shift, with the following metric for $\sum_{n=1}^{\infty} b_{n} < \infty$:
\[
\displaystyle d(x,y) = \sum_{n=1}^{\infty} b_{n}\mid x_{n} - y_{n} \mid,
\]
where $x = \{x_{n}\}, y = \{y_{n}\}$ and $b_{n+k} \leq b_{n}b_{k}$ for all $n,k \geq 1$. By induction, it means that $b_{n} \leq b_{1}^{n}$. Let us suppose that $b_{n} \leq a_{n}:=(n+b)^{-a}, a>1, b>0$ for all $n \geq 1$. 

We claim that $a_{n}$ defines a Lipschitz contraction for the shift map. We require that there exists $n_{0} > 1$ such that $b_{n} > a_{1}^{n} \geq b_{1}^{n}$ for $n \leq n_{0}$. So, the contraction is not exponential. In fact, if $x,y$ belongs to the cylinder $C_{k}$ we have
\begin{eqnarray*}
	\displaystyle d(x,y) &=& \sum_{n=1}^{\infty} b_{n}\mid x_{n} - y_{n} \mid = \sum_{n=k+1}^{\infty} b_{n}\mid x_{n} - y_{n} \mid = \sum_{n=1}^{\infty} b_{n+k}\mid x_{n+k} - y_{n+k} \mid\\
	&\leq& b_{k} \sum_{n=1}^{\infty} b_{n}\mid x_{n+k} - y_{n+k} \mid = b_{k} d(\sigma^{k}(x),\sigma^{k}(y)) \leq a_{k} d(\sigma^{k}(x),\sigma^{k}(y)).
\end{eqnarray*}
It implies that
\begin{eqnarray*}
	\displaystyle d(\sigma^{i}(x),\sigma^{i}(y)) \leq  a_{k-i} d(\sigma^{k-i}(\sigma^{i}(x)),\sigma^{k-i}(\sigma^{i}(y)))= a_{k-i}d(\sigma^{k}(x),\sigma^{k}(y)), i \leq k.
\end{eqnarray*}
It means that the sequence $a_{n}$ defines a Lipschitz contraction, as we claimed.

Now, every forward invariant set (in particular the whole $\Sigma_{2}^{+}$)  and all invariant measures for the shift $\sigma$ are not expanding but they are  zooming. In particular, if we consider an invariant set that is not dense such that the reference measure has a Jacobian with bounded distortion, we can obtain an open shift map with $H \neq \emptyset$. To be precise, by taking any (previously fixed) zooming set $\Lambda \subset \Sigma_{2}^{+}$ which is not dense such that the reference measure has a Jacobian with bounded distortion, we apply \cite{S}[Theorem A] to obtain an open zooming system and a Markov structure adapted to a hole $H \subset \Sigma_{2}^{+}$ such that $H \cap \Lambda = \emptyset$. It is enough to take $r_{0}>0$ as in Theorem \cite{S}[Theorem A] such that one of the balls of the open cover is disjoint from $\Lambda$. Hence, we can apply our Theorems \ref{B} and \ref{C} to obtain equilibrium states.

\subsection{Uniformly expanding maps} As can be seen in \cite{OV} Chapter 11, we have the so-called \textbf{\textit{uniformly expanding maps}} which is defined on a compact differentiable manifold $M$ as a $C^{1}$ map $f:M \to M$ (with no critical set) for which there exists $\sigma > 1$ such that
\[
\|Df(x)v\|\geq \sigma \|v\|, \,\, \text{for every} \,\, x \in M, v \in T_{x}M.
\]

For compact metric spaces $(M,d)$ we define it as a continuous map $f:M \to M$, for which there exists $\sigma > 1, \delta>0$ such that for every $x \in M$ we have that the image of the ball $B(x,\delta)$ contains a neighbourhood of the ball $B(f(x),\delta)$ and
\[
d(f(a),f(b)) \geq \sigma d(a,b), \,\, \text{for every} \,\, a,b \in B(x,\delta).
\]
We observe that the uniformly expanding maps on differentiable manifolds satisfy the conditions for the definition on compact metric spaces, when they are seen as Riemannian manifolds.

\subsection{Local diffeomorphisms}\label{local} As can be seen in details in \cite{A}, we will briefly describe a class of non-uniformly expanding maps.

Here we present a robust ($C^{1}$ open) classes of local diffeomorphisms (with no critical set) that are non-uniformly expanding. Such classes of maps can be obtained, e.g., through deformation of a uniformly expanding map by isotopy inside some small region. In general, these maps are not uniformly expanding: deformation can be made in such way that the new map has periodic saddles.

Let $M$ be a compact manifold supporting some uniformly expanding map $f_{0}$. $M$ could be the $d$-dimensional torus $\mathbb{T}^{d}$, for instance. Let $V \subset M$ be some small compact domain, so that the restriction of $f_{0}$ to $V$ is injective. Let $f$ be any map in a sufficiently small $C^{1}$-neighbourhood $\mathcal{N}$ of $f_{0}$ so that:

\begin{itemize}
	\item $f$ is \textit{volume expanding everywhere}: there exists $\sigma_{1} > 1$ such that
	\[
	|\det Df(x)| > \sigma_{1} \,\, \text{for every} \,\, x \in M;
	\]	
	
	\item $f$ is \textit{expanding outside} $V$: there exists $\sigma_{0} > 1$ such that
	\[
	\|Df(x)^{-1}\| < \sigma_{0} \,\, \text{for every} \,\, x \in M \backslash V;
	\] 
	
	\item $f$ is \textit{not too contracting on} $V$: there is some small $\delta > 0$ such that
	\[
	\|Df(x)^{-1}\| < 1 + \delta \,\, \text{for every} \,\, x \in V.
	\]
\end{itemize}

In \cite{A} it is shown that this class satisfy the condition for non-uniform expansion. In the following we show a Lemma from \cite{A} which proves that such maps are non-uniformly expanding with Lebesgue as a reference measure.
\begin{lemma}
	Let $B_{1},\dots, B_{k},B_{k+1}= V$ a partition of $M$ into domains such that $f$ is injective on $B_{j}, 1 \leq j \leq p+1$. There exists $\theta > 0$  such that the orbit of Lebesgue almost every point $x \in M$ spends a fraction $\theta$ of the time in $B_{1} \cap \dots B_{p}$, that is,
	\[
	\#\{0\leq j < n \mid f^{j}(x) \in B_{1} \cap \dots B_{p}\} \geq \theta n,
	\]
	for every large $n \in \mathbb{N}$.
\end{lemma}

\subsection{Open zooming systems from local diffeomorphisms} We can obtain an open zooming system such that the zooming set $\Lambda$ is disjoint from the hole $H$ ($\Lambda \cap H = \emptyset$) using a local diffeomorphism $f : M \to M$ which is non-uniformly expanding as in the subsection \ref{local}. 

Let the zooming set $\Lambda = \cap_{j=-\infty}^{\infty}f^{j}(M\backslash V)$ with positive Lebesgue measure $m$. Since $\Lambda \cap V = \emptyset$, we can take a zooming reference measure $\mu = m / m(\Lambda)$ which has a Jacobian with bounded distortion. The zooming set $\Lambda$ is disjoint from $V$ and we can take  the hole $H \subset V$ given by \cite{S}[Theorem A] (and $\Lambda \cap H = \emptyset$). This setup now allows us to apply Theorems \ref{B} and \ref{C} to obtain existence and finiteness of (open) equilibrium state (uniqueness afterwards). We observe that in the work \cite{ALP2}[Lemma 2.1](3) we have that the Lebesgue measure has a Jacobian with bounded distortion.

As a concrete example on the interval $[a,b]$, we can take a dynamically defined Cantor set with positive Lebesgue measure. It can be seen in \cite{PT}[Chapter 4] as an expanding map $g$ over a disjoint union of intervals $P = I_{1} \uplus \dots \uplus I_{k}$ onto the interval $[a,b]$, where $I_{j} \in [a,b]$ is a compact interval for every $1 \leq j \leq k$, that is, every interval $I_{j}$ is taken onto $[a,b]$. Outside the union $P$ we can define the map to have a measurable map $f:[a,b] \to [a,b]$. In \cite{PT} we see that the map $g$ has bounded distortion and we can extend it to the map $f$ preserving this property. The hole $H$ can be taken outside the union $P$.


\begin{thebibliography}{99}
	
	\bibitem{A}{\sc J. F. Alves}, {\it Statistical Analysis of Non-Uniformly Expanding Dynamical Systems}, $24^{\circ}$ Col\' oquio Brasileiro de Matem\' atica, 2003.
	
	\bibitem{A2} {\sc J. F. Alves}, {\it SRB Measures for Non-Hyperbolic Systems with Multidimensional Expansion}, Ann. Sci. École Norm. Sup. 4, 33, 2000, 1-32.
	
	\bibitem{ABV}{\sc J. F. Alves, C. Bonatti, M. Viana}, {\it SRB Measures for Partially Hyperbolic Systems whose Central Direction is Mostly Expanding}, Invent. Math., 140, 2000, 351-398.
	
	\bibitem{ALP1} {\sc J. F. Alves, S. Luzzatto, V. Pinheiro}, {\it Markov Structures for Non-Uniformly Expanding Maps on Compact Manifolds in Arbitrary Dimension}, Eletronic Research Annoucements of the American Mathematical Society, 9, 2003, 26-31. 
	
	\bibitem{ALP2} {\sc J. F. Alves, S. Luzzatto, V. Pinheiro}, {\it Markov Structures and Decay of Correlations for Non-Uniformly Expanding Dynamical Systems},  Annales de l'Institute Henri Poincar\' e, 22, 2005, 817-839. 
	
	\bibitem{AOS} {\sc J. F. Alves, K. Oliveira, E. Santana}, {\it Equilibrium States for Hyperbolic Potentials via Inducing Schemes}, Nonlinearity, 37, 2024, 095030.  
	
	\bibitem{ARS} {\sc J. F. Alves, V. Ramos, J. Siqueira}, {\it Equilibrium Stability for Non-Uniformly Hyperbolic Systems},  Ergodic Theory and Dynamical Systems, Published Online in 2018.
	
	\bibitem{AS} {\sc J. F. Alves, M. Soufi}, {\it Statistical Stability and Limit Laws for Rovella Maps}, Nonlinearity, 25, 2012, 3527-3552.
	
	\bibitem{AV} {\sc J. F. Alves, M. Viana}, {\it Statistical Stability for Robust Classes of Maps with Non-uniform Expansion}, Ergodic Theory and Dynamical Systems, 22, 2002, 1-32. 
	
	\bibitem{Ar} {\sc V. Araujo}, {\it Semicontinuity of entropy, existence of equilibrium states and continuity of physical measures}, Discrete and Continuous Dynamical Systems,  17(2) (2007), 371-386.
	
	\bibitem{ALP} {\sc E. Araujo, Y. Lima, M. Poletti}, {\it Symbolic Dynamics for Nonuniformly Hyperbolic
		Maps with Singularities in High Dimension}, Memoirs of the American Mathematical Society, 1511, 301, 2024. 
	
	\bibitem{AMO} {\sc A. Arbieto, C. Matheus, K. Oliveira}, {\it Equilibrium States for Random Non-Uniformly Expanding Maps}, Nonlinearity, 17, 2004, 581-593. 
	
	\bibitem{BR} {\sc R. A. Bilbao, V. Ramos}, {\it Uniqueness and stability of equilibrium states for random non-uniformly expanding maps}, Ergodic Theory and Dynamical Systems, 43, 2023,  2589 - 2623.
	
	\bibitem{BiS} {\sc R. A. Bilbao, E. Santana}, {\it Equilibrium Stability for Open Zooming Systems}, arXiv:2502.08693
	
	\bibitem{B}{\sc R. Bowen}, {\it Equilibrium States and the Ergodic Theory of Anosov Diffeomorphisms}, Springer-Verlag, 2008.
	
	\bibitem{BDM} {\sc H. Bruin, M. Demers, I. Melbourne}, {\it Existence and Convergence Properties of Physical Measures for Certain Dynamical Systems with Holes}, Ergodic Theory and Dynamical Systems, 30, 2010, 687-728.
	
	\bibitem{BK} {\sc H. Bruin, G. Keller}, {\it Equilibrium States for S-Unimodal Maps}, Ergodic Theory and Dynamical Systems, 18, 1998, 765-789.
	
	\bibitem{BT} {\sc H. Bruin, M. Todd}, {\it Equilibrium States for Interval Maps: The Potential $-t \log |df \mid$}, Annales Scientifiques de l'\'Ecole Normale Superieure, 42, 4, 2009, 559-600.
	
	\bibitem{BS} {\sc J. Buzzi, O. Sarig}, {\it Uniqueness of Equilibrium Measures for Countable Markov Shifts and Multidimensional Piecewise Expanding Maps}, Ergodic Theory and Dynamical Systems, 23, 2003, 1383-1400.
	
	\bibitem{DT} {\sc M. Demers, M. Todd}, {\it Equilibrium States, Pressure and Escape for Multimodal Maps with Holes}, Israel Journal of Mathematics, 221, 1, 2017, 367-424.
	
	\bibitem{DT2} {\sc M. Demers, M. Todd}, {\it Asymptotic Escape Rates and Limiting
		Distributions for Multimodal Maps}, Ergodic Theory and Dynamamical Systems, 41, 6, 2021, 1656 - 1705.
	
	\bibitem{DU} {\sc M. Denker, M. Urbanski}, {\it Ergodic Theory of Equilibrium States for Rational Maps}, Nonlinearity, 4, 1991, 103-134.
	
	\bibitem{DHL} {\sc K. D\'iaz-Ordaz, M. P. Holland, S. Luzzatto}, {\it Statistical Properties of One-Dimensional Maps with Critical Points and Singularities}, Stochastics and Dynamics, 6, 4, 2006, 423-458.
	
	\bibitem{DoT} {\sc N. Dobbs, M. Todd}, {\it Free Energy and Equilibrium States for Families of
		Interval Maps},  Memoirs of the American Mathematical Society, 1417, 286, 2023.
	
	\bibitem{FT} {\sc J. M. Freitas, M. Todd}, {\it The Statistical Stability of Equilibrium States for Interval Maps}, Nonlinearity, 22, 2009, 259-281.
	
	
	\bibitem{IRL} {\sc I. Inoquio-Renteria, J. Rivera-Letelier}, {\it A Characterization of Hyperbolic Potentials for Rational Maps}, Bull. Braz. Math. Soc (2012) 43 99?127.
	
	\bibitem{F} {\sc J. M. Freitas}, {\it Continuity of SRB Measure and Entropy for Benedicks-Carleson Quadratic Maps}, Nonlinearity, 18, 2005, 831.
	
	\bibitem{IJT} {\sc G. Iommi, T. Jordan, M. Todd}, {\it Recurrence and Transience for Suspension Flows}, Israel Journal of Mathematics, 209, 2, 2015, 547-592.
	
	\bibitem{IT1} {\sc G. Iommi, M. Todd}, {\it Thermodynamic Formalism for Interval Maps: Inducing Schemes}, Dynamical Systems, 28, 3, 2013, 354-380. 
	
	\bibitem{IT2} {\sc G. Iommi, M. Todd}, {\it Natural Equilibrium States for Multimodal Maps}, Communications in Mathematical Physics, 300, 2010, 65-94. 
	
	\bibitem{PL}
	\newblock F. Ledrappier and P. Walters, 
	\newblock A relativised variational principle for continuous transformations,
	\newblock \emph{J. London Math. Soc.}, (2), 16(1977), 568-576.
	
	
	\bibitem{LOR} {\sc R. Leplaideur, K. Oliveira, I. Rios}, {\it Invariant Manifolds and Equilibrium States for Non-Uniformly Hyperbolic Horseshoes}, Nonlinearity, 19, 2006, 2667-2694.
	
	
	\bibitem{LRL} {\sc H. Li, J. Rivera-Letelier}, {\it Equilibrium States of Weakly Hyperbolic One-Dimensional Maps for H\"older Potentials}, Communications in Mathematical Physics, 328, 2014, 397-419.
	
	\bibitem{L} {\sc K. Li}, {\it 
		Unique equilibrium states for Viana maps with small potentials}, arXiv:2508.00136.
	
	
	\bibitem{LOP} {\sc Y. Lima, D. Obata, M. Poletti}, {\it Measures of maximal entropy for non-uniformly hyperbolic maps}, arXiv:2405.04676.
	
	\bibitem{LUDWIG}  {\sc A. Ludwig}, {\it Random Dynamical Systems}, Springer Monographs in Mathematics
	(Springer, 1998).
	
	
	\bibitem{O} {\sc K. Oliveira}, {\it Equilibrium States for Non-Uniformly Expanding Maps}, Ergodic Theory and Dynamical Systems, 23, 06, 2003, 1891-1905.
	
	\bibitem{OV}{\sc K. Oliveira, M. Viana}, {\it Foundations of Ergodic Theory}, Cambridge University Press, 2016.
	
	\bibitem{PT}{\sc J. Palis, F. Takens}, {\it Hyperbolicity \& Sensitive Chaotic Dynamics at Homoclinic Bifurcations}, Cambridge University Press, 1993.
	
	
	\bibitem{Pe1} {\sc Ya. Pesin}, {\it On the Work of Omri Sarig on Countable Markov Chains and Thermodynamic Formalism}, Journal of Modern Dynamics, 8, 2014, 1-14.
	
	\bibitem{Pe2}{\sc Ya. Pesin}, {\it Dimension Theory in Dynamical Systems: Contemporary Views and Applications}, The University of Chicago Press, 2016.
	
	\bibitem{PS} {\sc Ya. Pesin, S. Senti}, {\it Equilibrium Measures for Maps with Inducing Schemes}, Journal of Modern Dynamics, Vol. 2, 3, 2008, 397-430.
	
	\bibitem{PY} {\sc G. Pianigiani, J. A. Yorke}, {\it Expanding Maps on Sets which are Almost Invariant: Decay and Chaos}, Transactions of the American Mathematical Society, 252, 1979, 351-366.
	
	\bibitem{Pi1} {\sc V. Pinheiro}, {\it Expanding Measures}, Annales de l'Institute Henri Poincar\' e, 28, 2011, 889-939.
	
	\bibitem{PV} {\sc V. Pinheiro, P. Varandas}, {\it Thermodynamic Formalism for Expanding Measures}, arXiv:2202.05019
	
	\bibitem{PU}{\sc M. Pollicott, M. Urbanski }, {\it Open Conformal 	Systems and Perturbations of Transfer
		Operators}, Springer-Verlag, LNM 2206, 2018.
	
	\bibitem{PRL} {\sc F. Przytycki, J. Rivera-Letelier}, {\it Nice Inducing Schemes and the Thermodynamics of Rational Maps}, Comm. Math. Phys. 301.3 (2011). 
	
	\bibitem{RV} {\sc V. Ramos, M. Viana}, {\it Equilibrium States for Hyperbolic Potentials}, Nonlinearity, 30, 2017, 825-847. 
	
	\bibitem{S} {\sc E. Santana}, {\it Equilibrium States for Open Zooming Systems}, arXiv:2010.08143
	
	
	\bibitem{Sa1} {\sc O. Sarig}, {\it Thermodynamic Formalism for Countable Markov Shifts}, Ergodic Theory and Dynamical Systems, 19, 1999, 1565-1593. 
	
	\bibitem{Sa2} {\sc O. Sarig}, {\it Existence of Gibbs Measures for Countable Markov Shifts}, Proceedings of the American Mathematical Society, 131, 6, 2003, 1751-1758. 
	
	\bibitem{Sa3} {\sc O. Sarig}, {\it Thermodynamic Formalism for Null Recurrent Potentials}, Israel Journal of Mathematics, 121, 2001, 285-311. 
	
	\bibitem{Sa4} {\sc O. Sarig}, {\it Lecture  Notes  on Thermodynamic Formalism for Topological Markov Shifts}, Penn State, 2009.
	
	\bibitem{Sa5} {\sc O. Sarig}, {\it Phase transitions for countable markov shifts} Communications of Mathematics Physics, 217, 2001, 555-577. 
	
	\bibitem{SSV}{\sc M. Stadlbauer, S. Suzuki and P. Varandas}.{\it Thermodynamic formalism for random non-uniformly expanding maps.} Commun. Math. Phys. 385: 369--427, 2021.
	
	
	\bibitem{VV} {\sc P. Varandas, M. Viana}, {\it Existence, Uniqueness and Stability of Equilibrium States for Non-Uniformly Expanding Maps}, Annales de l'Institute Henri Poincar\' e, 27, 2010, 555-593.
	
	\bibitem{V} {\sc M. Viana}, {\it Multidimensional NonHyperbolic Attractors},  Publications Math\' ematiques de l'Institut des Hautes \' Etudes Scientifiques, 85, 1, 1997, 63-96. 
	
	\bibitem{W} {\sc P. Walters}, {\it A Variational Principle for the Pressure of Continuous Transformations},
	American Journal of Mathematics, 97, 1975, 937-997. 
	
	\bibitem{Y} {\sc M. Yuri}, {\it Thermodynamical Formalism for Countable to One Markov Systems}, Transactions of the American Mathematical Society, 335, 2003, 2949-2971.
	
	\bibitem{Z} {\sc R. Zweim\" uller}, {\it Invariant Measure for General(ized) Induced Transformations}, Proceedings of the American Mathematical Society, 133, 8, 2005, 2283-2295.
\end{thebibliography}
\end{document}